\renewcommand*{\backref}[1]{}
\renewcommand*{\backrefalt}[4]{{\scriptsize(%
    \ifcase #1 Not cited.%
          \or Cited on p.~#2.%
          \else Cited on pp. #2.%
    \fi%
    )}}
\definecolor{dblue}{rgb}{0,0,0.70} 
\definecolor{fdblue}{rgb}{0,0,0.70} 
\definecolor{ffgreen}{rgb}{0.1,0.55,0.1} 
\definecolor{fdred}{rgb}{0.6,0,0.08} 
\setlist[enumerate,1]{label={\textup{\arabic*.}}} 
\newtheoremstyle{boldrk}
  {}{}
  {}{}
  {\bfseries}{.}
  {5pt plus 1pt minus 1pt}{}
\theoremstyle{plain}
\newtheorem{thm}{Theorem}[section]
\newtheorem{claim}{Claim}[thm]
\newtheorem{fact}[thm]{Fact}
\newtheorem{prop}[thm]{Proposition}
\newtheorem*{mainthm}{Main Theorem}
\theoremstyle{boldrk}
\newtheorem{egnum}[thm]{Example}
\newtheorem{qn}[thm]{Question}
\newtheorem*{eg}{Example}
\newtheorem*{rk}{Remark}
\theoremstyle{definition}
\newtheorem{defn}[thm]{Definition}
\newtheorem*{defnast}{Definition}
\renewcommand{\leq}{\leqslant}
\renewcommand{\epsilon}{\varepsilon}
\renewcommand{\subset}{\subseteq}
\renewcommand{\supset}{\supseteq}
\newcommand{\bbP}{\ensuremath{\mathbb{P}}}
\newcommand{\bbQ}{\ensuremath{\mathbb{Q}}}
\newcommand{\dsP}{\mathbb{P}}
\newcommand{\ddbbP}{\ensuremath{\dot{\bbP}}}
\newcommand{\ddbbQ}{\ensuremath{\dot{\bbQ}}}
\newcommand{\calI}{\ensuremath{\mathcal{I}}}
\newcommand{\sF}{\ensuremath{\mathcal{F}}}
\newcommand{\sG}{\ensuremath{\mathcal{G}}}
\newcommand{\dsF}{\ensuremath{\dot{\sF}}}
\newcommand{\dsG}{\ensuremath{\dot{\sG}}}
\newcommand{\dsI}{\ensuremath{\dot{\sI}}}
\newcommand{\frakF}{\ensuremath{\mathfrak{F}}}
\newcommand{\frakG}{\ensuremath{\mathfrak{G}}}
\newcommand{\power}{\ensuremath{\mathscr{P}}}
\newcommand{\sI}{\ensuremath{\mathscr{T}}} 
\newcommand{\sS}{\ensuremath{\mathscr{S}}}
\newcommand{\dsS}{\ensuremath{\dot{\sS}}}
\newcommand{\1}{\ensuremath{\mathds{1}}}
\newcommand{\AC}{\ensuremath{\mathsf{AC}}}
\newcommand{\HS}{\ensuremath{\mathsf{HS}}}
\newcommand{\KWP}{\ensuremath{\mathsf{KWP}}}
\newcommand{\ZF}{\ensuremath{\mathsf{ZF}}}
\newcommand{\ZFC}{\ensuremath{\mathsf{ZFC}}}
\DeclareMathOperator{\Add}{Add}
\DeclareMathOperator{\Aut}{Aut}
\DeclareMathOperator{\Col}{Col}
\DeclareMathOperator{\dom}{dom}
\DeclareMathOperator{\fix}{fix}
\DeclareMathOperator{\id}{id}
\DeclareMathOperator{\Ord}{Ord}
\DeclareMathOperator{\SC}{SC}
\DeclareMathOperator{\supp}{supp}
\DeclareMathOperator{\sym}{sym}
\newcommand{\dda}{\ensuremath{\dot{a}}}
\newcommand{\ddb}{\ensuremath{\dot{b}}}
\newcommand{\ddf}{\ensuremath{\dot{f}}}
\newcommand{\ddq}{\ensuremath{\dot{q}}}
\newcommand{\ddt}{\ensuremath{\dot{t}}}
\newcommand{\ddx}{\ensuremath{\dot{x}}}
\newcommand{\ddy}{\ensuremath{\dot{y}}}
\newcommand{\ddA}{\ensuremath{\dot{A}}}
\newcommand{\ddB}{\ensuremath{\dot{B}}}
\newcommand{\ddG}{\ensuremath{\dot{G}}}
\newcommand{\ddH}{\ensuremath{\dot{H}}}
\newcommand{\ddX}{\ensuremath{\dot{X}}}
\newcommand{\ddY}{\ensuremath{\dot{Y}}}
\newcommand{\abs}[1]{\ensuremath{\mathchoice
	{\left\lvert#1\right\rvert}
	{\lvert#1\rvert}
	{\lvert#1\rvert}
	{\lvert#1\rvert}
}} 
\newcommand{\tup}[1]{\ensuremath{\langle#1\rangle}}
\renewcommand{\colon}{\ensuremath{\nobreak\mskip2mu\mathpunct{}\nonscript
  \mkern-\thinmuskip{:}\mskip6muplus1mu\relax}}
\newcommand{\res}{\ensuremath{\nobreak\mskip2mu\mathpunct{}\nonscript
  \mkern-\thinmuskip{\upharpoonright}\mskip4muplus1mu\relax}}
\newcommand*{\defeq}{\mathrel{\vcenter{\baselineskip0.5ex \lineskiplimit0pt
                     \hbox{\scriptsize.}\hbox{\scriptsize.}}}%
                     =}
\newcommand{\comp}{\ensuremath{\mathrel{\parallel}}} 
\newcommand{\forces}{\ensuremath{\mathrel{\Vdash}}} 
\newcommand{\mint}{\ensuremath{\textstyle\int}} 
\newcommand{\sidecheck}[1]{\ensuremath{#1\,\check{}\,}} 
\newcommand{\concat}{\ensuremath{\mathbin{\raisebox{.9ex}{\scalebox{.7}{\(\frown\)}}}}} 
\newcommand{\lomega}{\ensuremath{{{<}\omega}}} 
\newcommand{\SetSymbol}{\ensuremath{\;\middle|\;}} 
\newcommand{\vphi}{\ensuremath{\varphi}} 
\title[Upwards Homogeneity]{Upwards Homogeneity in\linebreak Iterated Symmetric Extensions}
\date{4th October 2025}
\subjclass[2020]{Primary: 03E25; Secondary: 03E35, 03E40}
\keywords{Axiom of choice, symmetric extensions}
\thanks{The first and third authors' work was financially supported by EPSRC via the Mathematical Sciences Doctoral Training Partnership [EP/W523860/1]. The second author was supported by a UKRI Future Leaders Fellowship [MR/T021705/2]. For the purpose of open access, the authors have applied a Creative Commons Attribution (CC-BY) licence to any Author Accepted Manuscript version arising from this submission. No data are associated with this article.} 
\author[C. Ryan-Smith]{Calliope Ryan-Smith}
\email{cr638@cantab.ac.uk}
\urladdr{https://academic.calliope.mx}
\author[J. Schilhan]{Jonathan Schilhan}
\email{j.Schilhan@leeds.ac.uk}
\urladdr{http://www.logic.univie.ac.at/~schilhanj}
\author[Y. Wei]{Yujun Wei}
\email{mmssh@leeds.ac.uk}
\address{School of Mathematics, University of Leeds, LS2 9JT, UK}
\begin{document}

\begin{abstract}
It is sometimes desirable in choiceless constructions of set theory that one iteratively extends some ground model without adding new sets of ordinals after the first extension. Pushing this further, one may wish to have models \(V\subseteq M\subseteq N\) of \(\ZF\) such that \(N\) contains no subsets of \(V\) that do not already appear in \(M\). We isolate, in the case that \(M\) and \(N\) are symmetric extensions (particular inner models of a generic extension of \(V\)), the exact conditions that cause this behaviour and show how it can broadly be applied to many known constructions. We call this behaviour upwards homogeneity.
\end{abstract}

\maketitle

\section{Introduction}\label{sec:Introduction}

Cantor's \emph{well-ordering principle} is a foundational part of modern formalised mathematics that asserts that every set can be well-ordered. Using some translation, we may equally describe this idea as being that for every set we may construct a bijection between that set and a set of ordinals. The least order type of such a set is called the cardinality of that set and has innumerable applications across mathematics. However, this principle cannot be deduced from the other axioms of Zermelo\nobreakdash--Fraenkel set theory and thus one can create models of \(\ZF\) that do not satisfy this. One method to do so is \emph{symmetric extensions}, a technique that builds on forcing and constructs intermediate models \(V\subseteq M\subseteq V[G]\) for \(V\)-generic filters \(G\). This intermediate model is guaranteed to satisfy \(\ZF\) but may not satisfy the well-ordering principle and, hence, there may be sets that are not in bijection with a set of ordinals.

One generalisation of this view of the well-ordering principle would be to ask that all sets are in bijection with sets \emph{of sets} of ordinals. This also cannot be proved from \(\ZF\) alone, so one could go one step further and ask that all sets are in bijection with sets \emph{of sets of sets} of ordinals, and so on. To this end, we say that an \emph{\(\alpha\)-set} is an element of \(\power^\alpha(\Ord)\), the \(\alpha\)\textup{th} iteration of the power set operator on the class of ordinals taking unions at limit stages. The \emph{Kinna\nobreakdash--Wagner Principle} for \(\alpha\), written \(\KWP_\alpha\), is the assertion that every set is in bijection with a set of \(\alpha\)-sets. In particular, \(\KWP_0\) states that every set is in bijection with a set of ordinals and so is equivalent to the well-ordering principle. One may then hope that \(\ZF\) at least asserts that there is some ordinal \(\alpha\) such that \(\KWP_\alpha\) holds, but this is not the case. An early stage of showing this failure was Monro's iteration (explored in Section~\ref{sec:Examples;subsec:monros-iteration} and originally published in \cite{Monro1973}), a method by which a strictly increasing chain \(\tup{M_n\mid n<\omega}\) of models of \(\ZF\) are constructed such that for all \(k>n\), \(M_k\) and \(M_n\) have the exact same class of \(n\)-sets. In particular, since the chain is strictly increasing, \(M_k\vDash\lnot\KWP_n\). This stands in stark contrast to Balcar and Vop\v{e}nka's theorem that if \(V\) and \(W\) are transitive models of \(\ZF\), \(V\vDash\AC\) or \(W\vDash\AC\), and \(\power(\Ord)^V=\power(\Ord)^W\) then \(V=W\).\footnote{This result can be further generalised to ``if \(V\) and \(W\) are transitive models of \(\ZF\), \(V\vDash\KWP_\alpha\) or \(W\vDash\KWP_\alpha\), and \(\power^{\alpha+1}(\Ord)^V=\power^{\alpha+1}(\Ord)^W\) then \(V=W\)''.} Monro's construction was later extended by Shani in \cite{Shani2018}. Taking the failure of Kinna--Wagner further, the Bristol Model, exhibited in \cite{Karagila2018}, is a model of \(\ZF\) such that \(\ZF\vDash\lnot\KWP_\alpha\) for all ordinals \(\alpha\).

In each case the constructions make use of a certain homogeneity in their forcing.\footnote{Our explanation here shall be somewhat imprecise. All terms and notation used are explained in Section~\ref{sec:Preliminaries}.} For a notion of forcing \(\bbP\) and an automorphism \(\pi\) of \(\bbP\), one can extend \(\pi\) to act on all \(\bbP\)-names and, as one might expect, if \(p\forces\vphi(\ddx)\) for some \(p\in\bbP\) and \(\bbP\)-name \(\ddx\), then \(\pi p\forces\vphi(\pi\ddx)\). Suppose that for all \(q,q'\leq p\) there is \(\pi\) such that \(\pi q=q'\) and \(\pi\ddx=\ddx\). Then if \(q\forces\vphi(\ddx)\) we must have that for all \(q'\leq p\), \(q'\forces\vphi(\ddx)\) as well. Hence we must have that \(p\forces\vphi(\ddx)\) or \(p\forces\lnot\vphi(\ddx)\). By noting that ground-model sets are not moved by any \(\pi\), we have that for all ground-model sets \(y\), either \(p\forces y\in\ddx\) or \(p\forces y\notin\ddx\). Therefore, if \(p\forces\ddx\subseteq V\) then we must have \(p\forces\ddx\in V\). A similar process can be carried out in an iteration of symmetric extensions \(V\subseteq M\subseteq N\) to ensure that if \(\tup{p,\ddq}\forces\ddx\subseteq V\) then \(p\forces\ddx\in M\). That is, \(\power(V)^M=\power(V)^N\). We have isolated a homogeneity condition on iterations of symmetric extensions that precisely characterises when the second step does not add subsets of the ground model.

\begin{defnast}
\(\tup{\bbP_0,\sG_0,\sF_0}\ast\tup{\ddbbP_1,\dsG_1,\dsF_1}^\bullet\) is \emph{upwards homogenous} if for all \(\bar\Gamma\in\sF_0\ast\dsF_1\) there is a dense (equivalently, predense) collection of \(\tup{p^\circ,\ddq^\circ}\in\bbP_0\ast\ddbbP_1\) such that for all \(\tup{p,\ddq},\tup{p,\ddq'}\leq\tup{p^\circ,\ddq^\circ}\) there is \(\bar\pi\in\bar\Gamma\) such that \(\bar\pi\tup{p,\ddq}\comp\tup{p,\ddq'}\).
\end{defnast}

\begin{mainthm}
\(\sS=\tup{\bbP_0,\sG_0,\sF_0}\ast\tup{\ddbbP_1,\dsG_1,\dsF_1}^\bullet\) is upwards homogeneous if and only if for all symmetric extensions \(V\subseteq M\subseteq N\) by \(\sS\), \(\power(V)^M=\power(V)^N\).
\end{mainthm}

This homogeneity is exploited in several choiceless constructions, some of which we shall exhibit here: Monro's model of \(\lnot\KWP_n\) for all \(n<\omega\); Karagila\nobreakdash--Schlicht's `forgetting how to count' construction; Morris's construction of countable unions of countable sets, the power sets of which have arbitrarily large Lindenbaum number; and the Bristol model.

\begin{rk}
It is in the first exhibition of the Bristol model, \cite[Definition~3.20]{Karagila2018}, that the phrase ``upwards homogeneity'' is used, albeit with a different definition. The definition presented here is a modification of the original that allows it to work in all settings.
\end{rk}

\subsection{Structure of the paper}\label{sec:Introduction;subsec:Structure}

We establish preliminaries in Section~\ref{sec:Preliminaries} to fix notation and to introduce the essential mechanics of symmetric extensions and their iterations. In Section~\ref{sec:Upwards-Homogeneity} we define upwards homogeneity and prove our \hyperref[thm:upwards-homogeneous]{main theorem}. In Section~\ref{sec:Examples} we exhibit some examples that make use of this technique, illustrating the idea behind the definition. We end with some open questions in Section~\ref{sec:Future}.

\section{Preliminaries}\label{sec:Preliminaries}

By a \emph{forcing} we mean a partially ordered set \(\tup{\bbP,\leq_{\bbP}}\) with a maximum element \(\1_{\bbP}\). When there is no room for confusion, the subscripts may be omitted from \(\1_{\bbP}\) and \(\leq_{\bbP}\). Elements of \(\bbP\) are called \emph{conditions} and the relation \(q\leq_{\bbP}p\) is referred to as \(q\) \emph{extending} \(p\) or \(q\) being \emph{stronger} than \(p\). Furthermore, we follow the Goldstern alphabet convention: \(p\) is never a stronger condition than \(q\), etc. We say that \(p,p'\in\bbP\) are \emph{compatible}, denoted \(p\comp p'\), if they have a common extension in \(\bbP\).

We recursively define a \(\bbP\)-name as any set of tuples \(\tup{p,\ddx}\), where \(p\in\bbP\) and \(\ddx\) is a \(\bbP\)-name. We say that a \(\bbP\)-name \(\ddy\) \emph{appears in} \(\ddx\) if there is \(p\in\bbP\) such that \(\tup{p,\ddy}\in\ddx\).

For a family \(X=\{ \dot{x}_i\mid i\in I\}\) of \(\dsP\)-names the \emph{bullet name} for \(X\), denoted \(X^\bullet\), is
\begin{equation*}
\{\dot{x}_i\mid i\in I\}^\bullet=\{\tup{\1_{\dsP},\dot{x}_i}\mid i\in I\}.
\end{equation*}
The bullet name of a set provides a canonical name for the set of realisations of \(X\). This extends to ordered pairs, sequences, functions, etc. We recursively define canonical names for ground-model sets as \(\check{x}=\{\check{y}\mid y\in x\}^\bullet\) so that \(\check{x}\), the \emph{check name} for \(x\), is always evaluated to \(x\) in the forcing extension. In the case that the symbols representing a set are too long, such as \(f(x)\), we may put the check to the side instead, such as \(\sidecheck{f(x)}\). A formula in the language of the forcing \(\dsP\) is a formula in the language of set theory but with every free variable replaced by a \(\dsP\)-name.

Throughout this paper the dot notation \(\ddx\), \(\ddy\), etc.\ is reserved for names associated with some notion of forcing. If \(\dot{a}\) is a name, then the undotted \(a\) implicitly denotes the set interpreted by \(\dot{a}\) in the corresponding forcing extension, and vice versa.

\subsection{Symmetric extensions}

If \(V\) is a model of \(\AC\), then any forcing extension of \(V\) will also model \(\AC\), so additional ideas are required to obtain models in which \(\AC\) fails. Symmetric extensions provide a way to do this.

For a notion of forcing \(\bbP\), let \(\Aut(\bbP)\) be the group of automorphisms of \(\bbP\). For \(\pi\in\Aut(\bbP)\) and a \(\bbP\)-name \(\ddx\), we inductively define the action of \(\pi\) on \(\ddx\) by
\begin{equation*}
\pi\ddx=\{\tup{\pi p,\pi\ddy}\mid\tup{p,\ddy}\in\ddx\}.
\end{equation*}
The \emph{Symmetry Lemma}, recited below, is a consequence of this construction. A proof can be found in \cite[Lemma~5.13]{Jech1973}.
\begin{thm}[Symmetry Lemma]
Let \(\pi\in \Aut(\dsP)\). Then for any \(\dsP\)-name \(\dot{x}\) and formula \(\vphi\), \(p\forces \vphi(\dot{x})\) if and only if \(\pi p\forces\vphi(\pi\dot{x})\).\qed
\end{thm}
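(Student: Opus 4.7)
The plan is to prove the equivalence by induction on the complexity of the formula \(\vphi\), reducing everything to the atomic cases, which themselves are handled by a nested induction on the rank of names. The structural fact to exploit throughout is that an automorphism \(\pi\in\Aut(\dsP)\) is an order-preserving bijection that fixes \(\1_\dsP\); consequently \(\pi\) sends compatible pairs to compatible pairs, dense subsets below \(p\) to dense subsets below \(\pi p\), and the collection \(\{r\mid r\leq p\}\) bijectively to \(\{r\mid r\leq \pi p\}\). These observations let us transport any density or compatibility clause through \(\pi\) freely, and since \(\pi^{-1}\) is also in \(\Aut(\dsP)\), proving one direction suffices for every clause.

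For the atomic cases I would prove the two statements
\begin{equation*}
p\forces\ddx\in\ddy\iff\pi p\forces\pi\ddx\in\pi\ddy,\qquad p\forces\ddx=\ddy\iff\pi p\forces\pi\ddx=\pi\ddy,
\end{equation*}
simultaneously, by induction on \(\max(\mathrm{rank}(\ddx),\mathrm{rank}(\ddy))\). Using the standard recursive characterisation of the atomic forcing relation (for example, \(p\forces\ddx\subseteq\ddy\) iff for every \(\tup{q,\ddz}\in\ddx\) with \(p\comp q\) the set of \(r\leq p,q\) forcing \(\ddz\in\ddy\) is dense below a common extension, and \(p\forces\ddx\in\ddy\) iff \(\{r\leq p\mid\exists\tup{q,\ddz}\in\ddy,\,r\leq q\text{ and }r\forces\ddx=\ddz\}\) is dense below \(p\)), the verification is formal: by the recursive definition \(\pi\ddx=\{\tup{\pi q,\pi\ddz}\mid\tup{q,\ddz}\in\ddx\}\), so every condition appearing in the characterisation of \(p\forces\vphi(\ddx,\ddy)\) is translated into the corresponding condition for \(\pi p\forces\vphi(\pi\ddx,\pi\ddy)\) by applying \(\pi\) pointwise, and the inductive hypothesis closes the loop.

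The inductive step on formula complexity is then immediate: the cases of \(\land\), \(\lor\), and \(\lnot\) follow from the inductive hypothesis together with the observation about extensions above; for \(\exists x\,\vphi(x,\ddx)\), if \(p\) forces the statement then the set of \(r\leq p\) for which some \(\dsP\)-name \(\ddz\) satisfies \(r\forces\vphi(\ddz,\ddx)\) is dense below \(p\), and applying \(\pi\) together with the inductive hypothesis and the fact that \(\ddz\mapsto\pi\ddz\) is a bijection on \(\dsP\)-names yields density below \(\pi p\), as required. The main obstacle is genuinely just bookkeeping in the atomic base case: one must set up the double induction (on formula complexity outside, on name rank inside) carefully so that the mutual recursion between the \(\in\) and \(=\) clauses is respected, and verify that every clause in the chosen recursive definition of atomic forcing is symmetric enough that pointwise application of \(\pi\) produces precisely the corresponding clause for \(\pi\ddx,\pi\ddy,\pi p\). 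Once this is done, the higher-complexity steps are essentially mechanical.
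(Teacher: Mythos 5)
The paper does not prove the Symmetry Lemma; it cites \cite[Lemma~5.13]{Jech1973}. Your proposal reproduces the standard argument from that reference (double induction: rank of names for the atomic $\in$ and $=$ clauses, then formula complexity, using that an automorphism preserves order, compatibility, and density), and it is correct.
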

Let \(\sG\leq\Aut(\bbP)\). A \emph{filter of subgroups} of \(\sG\) is a set \(\sF\) of subgroups of \(\sG\) such that:
\begin{enumerate}
\item \(\sG\in\sF\);
\item if \(\Gamma\in\sF\) and \(\Gamma\leq\Delta\leq\sG\) then \(\Delta\in\sF\); and
\item if \(\Gamma,\Delta\in\sF\) then \(\Gamma\cap\Delta\in\sF\).
\end{enumerate}
\(\sF\) is \emph{normal} if for all \(\Gamma\in\sF\) and \(\pi\in\sG\), \(\pi\Gamma\pi^{-1}\in\sF\). Finally, a \emph{symmetric system} is a triple \(\sS=\tup{\bbP,\sG,\sF}\) such that \(\bbP\) is a notion of forcing, \(\sG\leq\Aut(\bbP)\), and \(\sF\) is a normal filter of subgroups of \(\sG\). Occasionally, to simplify the exposition, we may only require that \(\sG\) is acting on \(\bbP\) rather than literally consisting of maps \(\pi\colon\bbP\to\bbP\). All notions below are then derived in an analogous way.

Given a \(\bbP\)-name \(\ddx\) and a group \(\sG\leq\Aut(\bbP)\), we denote by \(\sym_{\sG}(\ddx)\) the set \(\{\pi\in\sG\mid\pi\ddx=\ddx\}\). Given a filter \(\sF\) of subgroups of \(\sG\), we say that \(\ddx\) is \emph{\(\sF\)-symmetric} (or simply \emph{symmetric}) if \(\sym_{\sG}(\ddx)\in\sF\). The class of all \emph{hereditarily symmetric} names\footnote{That is, \(\ddx\) is \(\sF\)-symmetric and, for all \(\ddy\) appearing in \(\ddx\), \(\ddy\) is hereditarily \(\sF\)-symmetric.} is denoted \(\HS_{\sF}\). If \(\sS=\tup{\bbP,\sG,\sF}\) is a symmetric system then we shall say that \(\ddx\) is an \emph{\(\sS\)-name} to mean that \(\ddx\) is a hereditarily \(\sF\)-symmetric \(\bbP\)-name.

If \(G\) is a \(V\)-generic filter for \(\bbP\), then \(\HS_{\sF}^G\defeq\{\dot{x}^G\mid \dot{x}\in \HS_{\sF}\}\) is a \emph{symmetric extension} over \(V\) given by \(\tup{\bbP,\sG,\sF}\), and is a submodel of \(V[G]\) satisfying \(\ZF\) (see \cite[Theorem~5.14]{Jech1973}). We also say that \(\HS_{\sF}^G\) is a symmetric extension \emph{according to \(\sS\)} or \emph{by \(\sS\)} in this case.

For \(p\in\bbP\) and \(\vphi\) a formula in the language of the forcing, we write \(p\forces_{\sS}\vphi\) to mean that \(p\) forces \(\vphi\) when quantifiers are relativised to the class \(\HS_{\sF}\) (this is often written \(p\forces^\HS\vphi\) in the literature).\footnote{Note that for all \(\Delta_0\) sentences \(\vphi\), such as \(\ddx\in\ddy\), \(p\forces\ddx\in\ddy\) if and only if \(p\forces_\sS\ddx\in\ddy\).} If \(p\forces_\sS\vphi\) or \(p\forces_\sS\lnot\vphi\) then we say that \(p\) has \emph{decided} \(\vphi\). Similar language may be used for more specific applications. For instance when \(\ddf\) is a name for a function with range included in \(V\), we might say that \(p\) decides \(\ddf(\ddx)\) if there is \(y\) such that \(p\forces_\sS\ddf(\ddx)=\check{y}\).

We will often omit subscripts when clear from context, so \(\HS\) means \(\HS_{\sF}\), \(\sym(\ddx)\) means \(\sym_{\sG}(\ddx)\), and so forth.

\begin{egnum}\label{eg:cohens-first-model}
An illustrative example of symmetric extensions is \emph{Cohen's first model}. Let \(\bbP=\Add(\omega,\omega)\), that is the forcing with conditions that are finite partial functions \(p\colon\omega\times\omega\to2\), ordered by reverse inclusion. For \(n<\omega\) let
\begin{equation*}
\dot{a}_n=\{\langle p,\check{k}\rangle\mid p(n,k)=1\}\quad\text{and}\quad\dot{A}=\{\dot{a}_n\mid n<\omega\}^\bullet.
\end{equation*}
The automorphism group \(\sG\) is \(S_{\lomega}\), the finitary permutations of \(\omega\). That is, those bijections \(\pi\colon\omega\to\omega\) such that \(\pi(n)=n\) for all but finitely many \(n\). We define the \(\sG\)-action on \(\bbP\) as
\begin{equation*}
\pi p(\pi n,m)=p(n,m),
\end{equation*}
in which case \(\pi\dot{a}_n=\dot{a}_{\pi n}\) and \(\pi \dot{A}=\dot{A}\).

We define the filter \(\sF\) as being generated by subgroups of the form
\begin{equation*}
\fix(E)=\{\pi\in\sG\mid\pi\res E=\id\}
\end{equation*}
for \(E\in [\omega]^\lomega\). \(\sF\) is normal since \(\pi\fix(E)\pi^{-1}=\fix(\pi``E)\), and so \(\sS=\tup{\bbP,\sG,\sF}\) is a symmetric system. As \(\fix(\{n\})\leq\sym(\dda_n)\) and \(\sG=\sym(\ddA)\) we see that \(\ddA\) and the \(\dda_n\) are all \(\sS\)-names.

\begin{claim}
Whenever \(M\) is a symmetric extension according to \(\sS\), \(A\) is infinite Dedekind-finite. That is, there is no injection \(\omega\to A\) in \(M\).
\end{claim}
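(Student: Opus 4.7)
The plan is the standard symmetry argument for Cohen's first model. Suppose for contradiction that there is some symmetric extension \(M\) by \(\sS\) containing an injection \(f\colon\omega\to A\). Pick an \(\sS\)-name \(\ddf\) for \(f\), and let \(E\in[\omega]^\lomega\) be such that \(\fix(E)\leq\sym(\ddf)\); such an \(E\) exists because \(\sym(\ddf)\in\sF\) and \(\sF\) is generated by the \(\fix(E)\). Fix a condition \(p\) that \(\sS\)-forces ``\(\ddf\colon\check\omega\to\ddA\) is injective''.

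Next, I would locate a condition that identifies a value of \(\ddf\) on some \(\dda_n\) with \(n\notin E\). Because \(p\) forces \(\ddf\) to be injective into \(\ddA=\{\dda_i\mid i<\omega\}^\bullet\) and \(E\) is finite, we may strengthen \(p\) to some \(p'\leq p\) and choose \(k\in\omega\) and \(n\notin E\) with
\begin{equation*}
p'\forces_\sS\ddf(\check k)=\dda_n.
\end{equation*}
Now pick \(m\in\omega\setminus(E\cup\{n\})\) so large that no pair of the form \(\tup{m,i}\) appears in \(\dom(p')\), and let \(\pi\) be the transposition \((n\;m)\), so \(\pi\in\fix(E)\leq\sym(\ddf)\) and \(\pi\dda_n=\dda_m\). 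By the Symmetry Lemma,
\begin{equation*}
\pi p'\forces_\sS\ddf(\check k)=\dda_m.
\end{equation*}

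The key verification is that \(p'\) and \(\pi p'\) are compatible: outside of columns \(n\) and \(m\) the action of \(\pi\) is trivial, so \(p'\) and \(\pi p'\) agree there; meanwhile column \(m\) of \(p'\) and column \(n\) of \(\pi p'\) are both empty by the choice of \(m\), so the remaining contributions sit on disjoint parts of \(\omega\times\omega\). Hence \(p'\cup\pi p'\) is a legitimate condition \(q\leq p',\pi p'\). But then \(q\) forces both \(\ddf(\check k)=\dda_n\) and \(\ddf(\check k)=\dda_m\), hence \(q\forces_\sS\dda_n=\dda_m\), contradicting the fact that below \(\1_\bbP\) the Cohen reals \(a_n\) and \(a_m\) are forced to differ (any extension of \(q\) can be strengthened to decide some \(\tup{n,i}\) and \(\tup{m,i}\) differently).

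The only nontrivial step is the compatibility check, and it hinges entirely on the freedom to choose \(m\) outside the finite set of columns mentioned by \(p'\); everything else is a routine application of the Symmetry Lemma together with the definition of \(\HS\).
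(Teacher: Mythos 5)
Your proposal is correct and uses essentially the same mechanism as the paper's own argument: fix a finite \(E\) stabilising \(\ddf\), force \(\ddf(\check k)=\dda_n\) for some \(n\notin E\), swap \(n\) with a fresh column \(m\notin E\) by a transposition in \(\fix(E)\), and note the two conditions are compatible yet force incompatible values, a contradiction. The only cosmetic difference is that you organise it as a proof by contradiction from a fixed injection, whereas the paper runs the density argument directly to show \(p\forces_\sS\ddf\) is not an injection.
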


\begin{proof}[Proof of Claim]
Firstly, for \(n\neq m<\omega\) the set \(\{p\in\bbP\mid(\exists k)\,p(n,k)\neq p(m,k)\}\) is dense and so for all \(n\neq m\), \(\1\forces_{\sS}\dda_n\neq\dda_m\). Hence \(A\) is infinite.

To see that \(A\) is Dedekind-finite, let \(\ddf\) be an \(\sS\)-name, \(p\in\bbP\) be such that \({p\forces_{\sS}``\ddf\text{ is a function }\check{\omega}\to\ddA"}\), and \(E\in[\omega]^\lomega\) be such that \(\fix(E)\leq\sym(\ddf)\). Let \(q_0\leq p\) and extend \(q_0\) to \(q\) such that for some \(m\notin E\) and \(n<\omega\), \(q\forces_{\sS}\ddf(\check{n})=\dda_m\). Since \(\dom(q)\) is finite, there is \(m'\notin E\) such that for all \(k<\omega\), \(\tup{m',k}\notin\dom(q)\). Setting \(\pi\) to be the transposition \((\begin{matrix}m&m'\end{matrix})\in\fix(E)\) we have that \(\pi q\comp q\) and \(\pi q\forces_{\sS}\pi\ddf(\pi\check{n})=\pi\dda_m\), so \(\pi q\forces_{\sS}\ddf(\check{n})=\dda_{m'}\). Since \(\pi q\comp q\), \(q\) cannot have forced that \(\ddf\) is an injection. Since \(q_0\) was arbitrary below \(p\) this behaviour is dense and \(p\forces_{\sS}``\ddf\text{ is not an injection}"\).
\end{proof}
\end{egnum}

\begin{defn}[Cohen forcing]
We shall make frequent use of the following generalisation of \(\Add(\omega,\omega)\): Given sets \(X\) and \(Y\) we denote by \(\Add(X,Y)\) the notion of forcing of partial functions \(p\colon Y\times X\to2\) such that \(\dom(p)\) is well-orderable and \(\abs{\dom(p)}<\abs{X}\). The ordering is given by \(q\leq p\) if \(q\supseteq p\). A generic filter for \(\Add(X,Y)\) will induce a new function \(c\colon Y\times X\to2\) and so can be thought of as introducing a \(Y\)-indexed family of new subsets of \(X\).
\end{defn}

\subsubsection{Iterating symmetric extensions}

The idea of iterating a symmetric extension, much like iterating forcing extensions, is quite simple at its core. One has a symmetric system \(\sS_0=\tup{\bbP_0,\sG_0,\sF_0}\) and considers the symmetric extension \(M=\HS^{G_0}_{\sF_0}\supseteq V\) given by some \(V\)-generic filter \(G_0\) for \(\sS_0\). Then, in \(M\), one can construct a further symmetric system \(\sS_1=\tup{\bbP_1,\sG_1,\sF_1}\) and can take, over \(M\), a symmetric extension by \(\sS_1\) to obtain \(N=\HS^{G_1}_{\sF_1}\supseteq M\) given by some \(M\)-generic filter \(G_1\) for \(\sS_1\). Analogously to iterating notions of forcing we can describe this process as a single symmetric extension in the ground model. While \cite{Karagila2016} has a thorough treatment of the topic, we shall only go over what is required for accessing our results. In particular, proofs that the constructions are well-defined are omitted in some cases.

\begin{defn}[Two-step symmetric extension]
Let \(\sS_0=\tup{\bbP_0,\sG_0,\sF_0}\) be a symmetric system, and let \(\dsS_1=\tup{\ddbbP_1,\dsG_1,\dsF_1}^\bullet\) be an \(\sS_0\)-name such that \(\1_{\bbP_0}\forces_{\sS_0}``\dsS_1\) is a symmetric system'' and \(\sym(\dsS_1)=\sG_0\). We define the iteration \(\sS=\sS_0\ast\dsS_1\) to be the symmetric system \(\tup{\bbP,\sG,\sF}\) thusly:

\(\bbP\) is defined similarly to the usual iteration of notions of forcing. Conditions in \(\bbP\) are pairs \(\tup{p,\ddq}\) such that \(p\in\bbP_0\), \(\ddq\) is an \(\sS_0\)-name and \(\1_{\bbP_0}\forces\ddq\in\ddbbP_1\).\footnote{As in forcing iterations, since the collection of such names \(\ddq\) is technically a proper class, we would for instance put a bound on the rank of \(\ddq\).} The ordering is given by \(\tup{p',\ddq'}\leq\tup{p,\ddq}\) if \(p'\leq_{\bbP_0}p\) and \(p'\forces\ddq'\leq_{\ddbbP_1}\ddq\).

For \(\tup{p,\ddq}\in\bbP\), if \(\1_{\bbP_0}\forces\dot\pi_1\in\dsG_1\) then there is a canonical choice for an \(\sS_0\)\nobreakdash-name \(\ddq'\) such that \(\1_{\bbP_0}\forces\dot\pi_1(\ddq)=\ddq'\).\footnote{For instance, let \(\ddq'=\{\tup{p,\ddq}\mid\ddy\text{ is an }\sS_0\text{-name and }p\forces\ddy\in\dot\pi_1(\ddq)\}\), where we bound the rank of \(\ddy\) appropriately.} We shall denote this name simply by \(\dot\pi_1\ddq\) to coincide with the notation \(\pi_0p\) for the action of \(\sG_0\) on \(\bbP_0\).
We set \(\sG=\sG_0\ast\dsG_1\) to be the group with elements that are pairs \(\bar\pi=\tup{\pi_0,\dot\pi_1}\) where \(\pi_0\in\sG_0\) and \(\1_{\bbP_0}\forces\dot\pi_1\in\dsG_1\), with group action on \(\bbP\) given by
\begin{equation*}
\bar\pi\tup{p,\ddq}=\tup{\pi_0 p,\pi_0(\dot\pi_1\ddq)}=\tup{\pi_0 p,(\pi_0\dot\pi_1)(\pi_0\ddq)}.\footnote{Technically speaking this might not end up being an automorphism, since the action of \(\bar\pi\) might not be invertible as only very specific names are of the form \(\dot\pi_1\ddq_1\). There are several ways to work around this, though. For instance, this does define an automorphism in the separative quotient of \(\bbP\) (see \cite{Karagila2016}). Another work-around that will appear in an upcoming paper by the second author is to only consider names of the form \(\ddq=\id^\bullet\ddq\) in the definition of \(\bbP\) in the first place. Every \(\sS_0\)-name for an element of \(\ddbbP_1\) is forced to be equal to a name of this form, as can be easily seen.}
\end{equation*}
This action is sometimes denoted \(\mint_{\bar\pi}\tup{p,\ddq}\) in the literature, especially in longer iterations, and is used in Section~\ref{s:eg;ss:bristol}.

Whenever \(\Gamma_0\in\sF_0\), \(\1_{\bbP_0}\forces\dot\Gamma_1\in\dsF_1\) and \(\Gamma_0\leq\sym_{\sG_0}(\dot\Gamma_1)\), we identify \({\bar\Gamma=\tup{\Gamma_0,\dot\Gamma_1}}\) with the group of \(\tup{\pi_0,\dot\pi_1}\), where \(\pi_0\in\Gamma_0\) and \(\1_{\bbP_0}\forces\dot\pi_1\in\dot\Gamma_1\). Finally, we set \(\sF=\sF_0\ast\dsF_1\) to be the filter generated by subgroups of \(\sG\) of this form.

In our constructions we shall further require, without loss of generality, that each of \(\ddbbP_1\), \(\dot\leq_{\ddbbP_1}\), \(\dsG_1\), and \(\dsF_1\) are \emph{\(\sG_0\)-stable}. That is, \(\sG_0\leq\sym_{\bbP_0}(\ddbbP_1)\), etc.
\end{defn}

\subsection{Wreath products}

For this section only, let \(G\) and \(H\) be groups acting on sets \(X\) and \(Y\) respectively. We shall define the wreath product of \(G\) by \(H\) that will be required in Section~\ref{sec:Examples;subsec:morris-model} and is commonly used elsewhere in symmetric extensions.

\begin{defn}\label{defn:wreath-product}
Let \(G\) be a group acting on a set \(X\) and \(H\) a group acting on a set \(Y\).
\begin{itemize}
\item To each \(x\in X\) and \(h\in H\) we associate an element \(h^\ast_x\in S_{X\times Y}\) by 
\begin{equation*}
h^\ast_x(x',y)=\begin{cases}
(x',hy) & \text{if }x'=x,\\
(x',y) & \text{if }x'\neq x.
\end{cases}
\end{equation*}
\item To each \(g\in G\) we associate an element \(g^\ast\in S_{X\times Y}\) by
\begin{equation*}
g^*(x,y)=(gx,y). 
\end{equation*}
\end{itemize}
When then define the \emph{wreath product} of \(G\) by \(H\), denoted \(G\wr H\), is the subgroup of \(S_{X\times Y}\) consisting of elements of the form
\begin{equation*}
\pi=(g^*,(h^*_x)_{x\in X})\text{, where }\pi(x,y)=g^\ast(h_x^\ast(x,y)).
\end{equation*}
We denote by \(\pi^*\) the \(g^*\) component of \(\pi\) and by \(\pi_x^*\) the \(h_x^*\) component of \(\pi\).
\end{defn}

This notation differs from the typical presentation given in group-theoretic contexts, but it is more intuitive for our purposes. One may consider the action of \(G\wr H\) pictorially: Suppose that \(G\) acts on a circle \(X\) upon which we wind many smaller circles, each copies of \(Y\), to form a wreath. Then an element \((g^*,(h^*_x)_{x\in X})\) of \(G\wr H\) acts on the wreath by applying \(h^*_x\) to the copy of \(Y\) at the \(x\) co-ordinate, and then applying \(g^*\) to the wreaths as a collective, as in Figure~\ref{fig:wreath}.

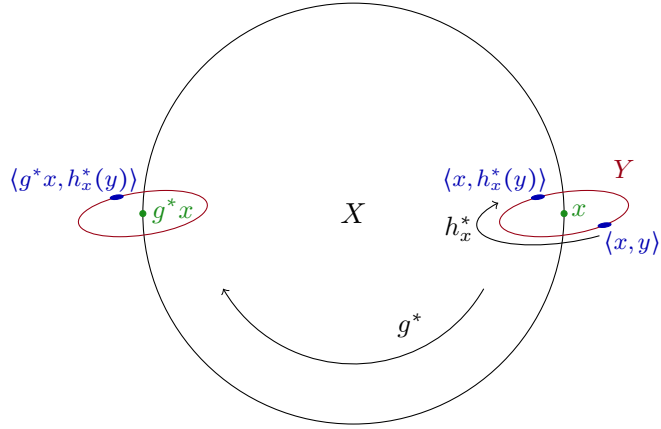
\begin{figure}[h]
\begin{tikzpicture}[scale=0.4]
	\draw[yslant=0,xslant=0] (0,0) circle [radius=7];
	\node (X) at (0,0) {\large$X$};
	\draw[->] ({sqrt(18.75)},{-2.5}) arc (-30:-150:5);
	\node (gast) at (1.9,-3.7) {$g^\ast$};
	\node (hast) at (3.5,-0.5) {$h_x^\ast$};

\begin{scope}[canvas is xz plane at y=0]
	\draw[color=fdred] (7,0) circle [radius=2];
	\draw[fill=fdblue,color=fdblue] ({7+sqrt(3)},{1}) circle [radius=0.2];
	\node[color=fdblue] (xy) at ({8.5+sqrt(3)},{1+1.5}) {\small$\tup{x,y}$};
	\draw[->] ({7.5+sqrt(2)},{0.5+sqrt(2)}) arc (45:200:{sqrt(4.5+sqrt(8))});
	\draw[fill=fdblue,color=fdblue] ({7-sqrt(2)},{-sqrt(2)}) circle [radius=0.2];
	\node[color=fdblue] (xhy) at ({5-sqrt(2)},{-1.5-sqrt(2)}) {\small$\tup{x,h_x^\ast(y)}$};

	\draw[color=fdred] (-7,0) circle [radius=2];
	\draw[fill=fdblue,color=fdblue] ({-7-sqrt(2)},{-sqrt(2)}) circle [radius=0.2];
	\node[color=fdblue] (gxhy) at ({-9-sqrt(2)},{-1.5-sqrt(2)}) {\small$\tup{g^\ast x,h_x^\ast(y)}$};
\end{scope}

	\node[color=fdred] (Y) at (9,1.5) {$Y$};
	\node[color=ffgreen] (x) at (7.5,0.1) {$x$};
	\draw[fill=ffgreen,color=ffgreen] (7,0) circle [radius=0.1];
	\node[color=ffgreen] (gx) at (-6,0.1) {$g^\ast x$};
	\draw[fill=ffgreen,color=ffgreen] (-7,0) circle [radius=0.1];

\end{tikzpicture}
\caption{The action of a wreath product: The co-ordinate \(\tup{x,y}\) is transformed by \(\pi=(g^\ast,(h_x^\ast)_{x\in X})\) first according to \(h_x^\ast\) into \(\tup{x,h_x^\ast(y)}\), and then according to \(g^\ast\) into \(\tup{g^\ast x,h_x^\ast(y)}\).}
\label{fig:wreath}
\end{figure}

\section{Upwards homogeneity}\label{sec:Upwards-Homogeneity}

Recall that we are concerned with iterations of symmetric extensions \(\sS=\sS_0\ast\dsS_1\) given by the symmetric systems \(\tup{\bbP_0,\sG_0,\sF_0}\ast\tup{\ddbbP_1,\dsG_1,\dsF_1}^\bullet=\tup{\bbP,\sG,\sF}\).

\begin{defn}\label{defn:upwards-homogeneous}
Let \(\sS=\tup{\bbP_0,\sG_0,\sF_0}\ast\tup{\ddbbP_1,\dsG_1,\dsF_1}^\bullet=\sS_0\ast\dsS_1\) be an iteration of symmetric extensions. We say that \(\sS\) is \emph{upwards homogeneous} if, for all \(\bar\Gamma\in\sF\), there is a predense collection of conditions \(\tup{p^\circ,\ddq^\circ}\in\bbP\) satisfying the following property:
\begin{quote}
For all \(\tup{p,\ddq},\tup{p,\ddq'}\leq\tup{p^\circ,\ddq^\circ}\) there is \(\bar\pi\in\bar\Gamma\) such that \(\bar\pi\tup{p,\ddq}\comp\tup{p,\ddq'}\).
\end{quote}
That is, there is a predense collection of elements \(\tup{p^\circ,\ddq^\circ}\) such that whenever \(p\leq p^\circ\), and \(p\forces_{\sS_0}\ddq,\ddq'\leq\ddq^\circ\) there is \(\pi_0\in\Gamma_0\) and \(\1_{\bbP_0}\forces_{\sS_0}\dot\pi_1\in\dot\Gamma_1\) such that \(\pi_0p\comp p\) and, for some \(p'\leq p,\pi_0(p)\), \(p'\forces_{\sS_0}\pi_0(\dot\pi_1\ddq)\comp\ddq'\).
\end{defn}

Note that if \(\tup{p^\circ,\ddq^\circ}\) satisfies this condition for a particular \(\bar\Gamma\in\sF\) then all extensions of \(\tup{p^\circ,\ddq^\circ}\) do as well. Hence, we may equivalently replace ``predense'' by dense or open dense in the definition.

\begin{mainthm}
\label{thm:upwards-homogeneous}
An iteration of symmetric extensions \(\sS=\sS_0\ast\dsS_1\) is upwards homogeneous if and only if for all symmetric extensions \(V\subseteq M\subseteq N\) that \(\sS\) generates and all \(X\in N\) with \(X\subseteq V\), we have \(X\in M\).
\end{mainthm}
Note that in the case \(V\vDash\AC\) this is equivalent to saying that there are no new sets of ordinals in \(N\) that were not already in \(M\).
\begin{proof}
\((\implies)\). Let \(\ddX\) be an \(\sS\)-name such that \(\1\forces_{\sS}\ddX\subseteq\check{A}\) for some \(A\in V\). Letting \(\bar\Gamma\in\sF\) be such that \(\bar\Gamma\leq\sym(\ddX)\), we claim that whenever \(\tup{p^\circ,\ddq^\circ}\in\bbP\) is as in the definition of upwards homogeneous, \(\tup{p,\ddq}\leq\tup{p^\circ,\ddq^\circ}\), and \(\tup{p,\ddq}\forces_{\sS}\check{a}\in\ddX\), then \(\tup{p,\ddq^\circ}\forces_{\sS}\check{a}\in\ddX\) as well.

Let \(\tup{p',\ddq'}\leq\tup{p,\ddq^\circ}\). By the definition of upwards homogeneous, there is \(\bar\pi\in\bar\Gamma\) such that \(\bar\pi\tup{p',\ddq}\comp\tup{p',\ddq'}\). Given that \(\tup{p',\ddq}\leq\tup{p,\ddq}\), we have that \({\tup{p',\ddq}\forces_{\sS}\check{a}\in\ddX}\) and, indeed, \(\bar\pi\tup{p',\ddq}\forces_{\sS}\check{a}\in\ddX\) as well. Since \(\bar\pi\tup{p',\ddq}\comp\tup{p',\ddq'}\), we cannot have that \(\tup{p',\ddq'}\forces_{\sS}\check{a}\notin\ddX\) and so \(\tup{p,\ddq^\circ}\forces_{\sS}\check{a}\in\ddX\) as required. Similarly, if \(\tup{p,\ddq}\forces_{\sS}\check{a}\notin\ddX\), then \(\tup{p,\ddq^\circ}\forces_{\sS}\check{a}\notin\ddX\) as well. Hence, if we let
\begin{equation*}
\ddY=\left\{\tup{\pi p,\check{a}}\SetSymbol \pi\in\Gamma_0,\,a\in A,\,\tup{\pi p,\pi\ddq^\circ}\forces_{\sS}\check{a}\in\ddX\right\}
\end{equation*}
then \(\Gamma_0\leq\sym_{\sG_0}(\ddY)\), so \(\ddY\) is an \(\sS_0\)-name, and \(\tup{p^\circ,\ddq^\circ}\forces_{\sS}\ddX=\ddY\).

\((\impliedby)\). Let \(\bar\Gamma\) and \(\tup{p_0,\ddq_0}\) be arbitrary. Consider the \(\bbP\)-name
\begin{equation*}
\ddX=\left\{\tup{\bar\pi\tup{p,\ddq},\sidecheck{\tup{p,\ddq}}}\SetSymbol \tup{p,\ddq}\in\bbP,\,\bar\pi\in\bar\Gamma\right\},
\end{equation*}
where \(\sidecheck{\tup{p,\ddq}}\) refers to the check name for the \emph{ground model set} \(\tup{p,\ddq}\), rather than the condition in \(\bbP\) or the \(\sS_0\)-name for an element of \(\ddbbP_1\). Note that \(\bar\Gamma\leq\sym(\ddX)\) and every \(\bbP\)-name appearing in \(\ddX\) is a check name, so \(\ddX\) is a hereditarily \(\sF\)-symmetric name for a subset of \(V\). Note also that since all \(\bbP\)-names appearing in \(\ddX\) are check names, for all generic \(G\subseteq\bbP\) we have \(\tup{p,\ddq}\in\ddX^G\) if and only if there is \(\bar\pi\in\bar\Gamma\) such that \(\bar\pi\tup{p,\ddq}\in G\). By assumption there is an \(\sS_0\)-name \(\ddY\) and \(\tup{p^\circ,\ddq^\circ}\leq \tup{p_0, \ddq_0}\) such that \({\tup{p^\circ,\ddq^\circ}\forces_{\sS}\ddX=\ddY}\).

We claim that \(\tup{p^\circ,\ddq^\circ}\) is as in the definition of upwards homogeneity. Towards this end, let \(\tup{p,\ddq},\tup{p,\ddq'}\leq\tup{p^\circ,\ddq^\circ}\) be arbitrary. Since \(\tup{p,\ddq}\forces_{\sS}\sidecheck{\tup{p,\ddq}}\in\ddX\) and \(\tup{p,\ddq}\leq\tup{p^\circ,\ddq^\circ}\), we have that \(\tup{p,\ddq}\forces_{\sS}\sidecheck{\tup{p,\ddq}}\in\ddY\) as well. Furthermore, since \(\ddY\) and \(\sidecheck{\tup{p,\ddq}}\) are \(\sS_0\)-names, we in fact have that \(p\forces_{\sS_0}\sidecheck{\tup{p,\ddq}}\in\ddY\). If \(G\subseteq\bbP\) is then an arbitrary generic filter with \(\tup{p,\ddq'}\in G\) we have that \(\tup{p,\ddq}\in\ddY^G=\ddX^G\) and, by the observation above, there is \(\bar\pi\in\bar\Gamma\) such that \(\bar\pi\tup{p,\ddq}\in G\). Since both \(\tup{p,\ddq'}\) and \(\bar\pi\tup{p,\ddq}\) lie in the same filter, these conditions must be compatible and we have proved our claim.
\end{proof}

By inspecting the prior proof, we have actually made a somewhat stronger assertion.

\begin{thm}
Let \(\bar\Gamma\in\sF\) and \(\tup{p^\circ,\ddq^\circ}\in\bbP\). The following are equivalent:
\begin{enumerate}
\item For all \(\tup{p,\ddq},\tup{p,\ddq'}\leq\tup{p^\circ,\ddq^\circ}\) there is \(\bar\pi\in\bar\Gamma\) such that \(\bar\pi\tup{p,\ddq}\comp\tup{p,\ddq'}\);
\item \(\tup{p^\circ,\ddq^\circ}\) forces that all \(\bar\Gamma\)-stable names are equivalent to an \(\sS_0\)-name; and
\item \(\tup{p^\circ,\ddq^\circ}\) forces that the \emph{\(\bar\Gamma\)-symmetrisation} of the generic object \(G\),
\begin{equation*}
\bar\Gamma\ddG=\left\{\bar\pi\tup{\tup{p,\ddq},\sidecheck{\tup{p,\ddq}}}\SetSymbol\bar\pi\in\bar\Gamma,\,\tup{p,\ddq}\in\bbP\right\}
\end{equation*}
is equivalent to an \(\sS_0\)-name. \qed
\end{enumerate}
\end{thm}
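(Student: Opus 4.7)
The plan is to prove the cycle $(1) \Rightarrow (2) \Rightarrow (3) \Rightarrow (1)$, where each implication is a pointwise refinement of arguments already given in the proof of the \hyperref[thm:upwards-homogeneous]{Main Theorem}, with $\tup{p^\circ, \ddq^\circ}$ fixed at the outset rather than being chosen densely.

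For $(3) \Rightarrow (1)$, I would transcribe the $(\impliedby)$ direction of the Main Theorem almost verbatim. Given $\tup{p,\ddq}, \tup{p,\ddq'} \leq \tup{p^\circ,\ddq^\circ}$, we have $\tup{p,\ddq} \forces_\sS \sidecheck{\tup{p,\ddq}} \in \bar\Gamma\ddG$; by (3), this forces $\sidecheck{\tup{p,\ddq}} \in \ddY$ for the $\sS_0$-name $\ddY$ provided, and this fact descends to $p \forces_{\sS_0} \sidecheck{\tup{p,\ddq}} \in \ddY$. Taking any generic $G \ni \tup{p,\ddq'}$, the pair $\tup{p,\ddq}$ then appears in $\ddY^G = (\bar\Gamma\ddG)^G$, so some $\bar\pi \in \bar\Gamma$ has $\bar\pi\tup{p,\ddq} \in G$ and is hence compatible with $\tup{p,\ddq'}$. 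For $(2) \Rightarrow (3)$, I would simply note that $\bar\Gamma\ddG$ is itself $\bar\Gamma$-stable: any $\bar\sigma \in \bar\Gamma$ merely reindexes the $\bar\pi$ in the defining formula and fixes every check name $\sidecheck{\tup{p,\ddq}}$, so $\bar\sigma\cdot\bar\Gamma\ddG = \bar\Gamma\ddG$, and (2) applies to this name.

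The substantive step is $(1) \Rightarrow (2)$, which adapts the $(\implies)$ direction of the Main Theorem. The same Symmetry Lemma argument used there, applied to an arbitrary $\bar\Gamma$-stable name $\dot{z}$ in place of a check name $\check{a}$, yields the key fact that under (1), whenever $\tup{p,\ddq} \leq \tup{p^\circ,\ddq^\circ}$ forces $\dot{z} \in \ddX$, already $\tup{p,\ddq^\circ}$ forces it. Given a $\bar\Gamma$-stable name $\ddX$, I would imitate the construction of $\ddY$ from the Main Theorem by setting
\[
\ddY = \{\tup{\pi_0 p, \pi_0 \dot{z}} \SetSymbol \pi_0 \in \Gamma_0,\ \dot{z} \text{ an }\sS_0\text{-name},\ \tup{\pi_0 p, \pi_0 \ddq^\circ} \forces_\sS \dot{z} \in \ddX\},
\]
then check that $\Gamma_0 \leq \sym_{\sG_0}(\ddY)$ and that $\tup{p^\circ,\ddq^\circ} \forces_\sS \ddX = \ddY$. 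The hard inclusion is $\ddX \subseteq \ddY$: it requires showing that every element of $\ddX^G$ is named by some $\sS_0$-name, which amounts to descending through the hereditary structure of $\ddX$. I expect this to be the main obstacle, and plan to handle it by $\in$-induction in which children of $\ddX$ that are not themselves $\bar\Gamma$-stable are grouped into $\bar\Gamma$-orbits whose canonical representatives are stable and so fall under the inductive hypothesis.
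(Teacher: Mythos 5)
Your cycle $(1) \Rightarrow (2) \Rightarrow (3) \Rightarrow (1)$ is the natural decomposition, and the $(3) \Rightarrow (1)$ and $(2) \Rightarrow (3)$ steps are correct, matching what the paper has in mind when it says ``by inspecting the prior proof.'' The gap is in $(1) \Rightarrow (2)$: you have read condition~(2) as ranging over \emph{arbitrary} $\bar\Gamma$-stable $\bbP$-names, and the $\in$-induction you sketch is an attempt to make that stronger claim go through. But that stronger claim is false. In Monro's iteration take $\ddX = \ddA_{2}$, the canonical name for the second family of Cohen reals; this name is fixed by the whole automorphism group, hence $\bar\Gamma$-stable for every $\bar\Gamma$, yet no condition can force it equal to an $\sS_0$-name since $A_2 \notin M_1$ --- even though Proposition~\ref{prop:monro} shows that condition~(1) holds on a dense set. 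So~(2) must be read as quantifying only over $\bar\Gamma$-stable names that are (forced to be) subsets of the ground model, exactly the class the \hyperref[thm:upwards-homogeneous]{Main Theorem} treats.

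Once you make this restriction, the machinery you were worried about evaporates: every element realised by such an $\ddX$ is a ground-model set, so in your $\ddY$ the $\sS_0$-names $\dot{z}$ may be taken to be check names $\check{a}$, and the $(\implies)$ direction of the Main Theorem carries over verbatim with no $\in$-induction. The orbit-grouping idea cannot rescue the unrestricted version in any case, since the $\bar\Gamma$-orbit of a name such as $\dda^{1}_{k}$ still names a set lying outside $M_1$; no rearrangement of the hereditary structure will produce an $\sS_0$-name for a set that simply is not in the intermediate model.
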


For the rest of the paper we shall exhibit how this definition uniformises various constructions that use upwards homogeneity.

\section{Examples}\label{sec:Examples}

\subsection{Monro's iteration}\label{sec:Examples;subsec:monros-iteration}

Monro's article \cite{Monro1973} was inspired by a theorem of Balcar and Vop\v{e}nka (from \cite{vapenka_complete_1967}) that if \(M\) and \(N\) are transitive models of \(\ZF\) such that \(\power(\Ord)^M=\power(\Ord)^N\) and either \(M\vDash\AC\) or \(N\vDash\AC\), then \(M=N\). Monro's paper also built on a later construction by Jech in \cite{jech_models_1971} of a counterexample to this theorem for general models of \(\ZF\). His construction extends this process to build a chain of models \(\tup{M_n\mid n<\omega}\) of \(\ZF\), where \(M_0\) is a model of \(\ZFC\), such that for all \(n<\omega\), \({\power^n(\Ord)^{M_n}=\power^n(\Ord)^{M_{n+1}}}\), but \(M_n\neq M_{n+1}\), where \(\power^n(\Ord)\) refers to the \(n\)\textup{th} iterate of the power set operation on \(\Ord\). This construction would later be extended even further by \cite{Shani2018} to go beyond the \(\omega\)\textup{th} step.

We shall inductively define a sequence of models \(\tup{M_n\mid n<\omega}\), Dedekind\nobreakdash-finite sets \(A_n\in M_n\) (excepting \(A_0=\omega\)), and symmetric systems \({\sS_n=\tup{\bbP_n,\sG_n,\sF_n}\in M_n}\) such that \(M_{n+1}\) is a symmetric extension of \(M_n\) by \(\sS_n\). We begin with \(M_0=V\) a model of \(\ZFC\) and let \(A_0=\omega\). If \(M_n\) and \(A_n\) have been chosen, the symmetric system \(\sS_n\) is defined as follows.

The forcing \(\bbP_n\) is \(\Add(A_n,\omega)\). Note that since \(A_n\) is \(\omega\) or Dedekind-finite, this is the forcing given by finite partial functions \(\omega\times A_n\to2\) ordered by reverse inclusion. We define the \(\bbP_n\)-names
\begin{align*}
\dda^n_k & =\{\tup{p,\check{x}}\mid p(k,x)=1,\ x\in A_n\}\text{ and}\\
\ddA_{n+1} & =\{\dda^n_k\mid k<\omega\}^\bullet.
\end{align*}
Let \(\sG_n= S_{\lomega}\), the group of finitary permutations of \(\omega\) with group action on \(\bbP_n\) given by \({\pi p(\pi n,x)=p(n,x)}\). Then \(\pi\dot{a}_k^n=\dot{a}_{\pi k}^n\) and \(\pi \dot{A}_{n+1}=\dot{A}_{n+1}\) for all \(\pi\in\sG_n\).

The filter \(\sF_n\) is generated by the groups \(\fix(E)\defeq\{\pi\in\sG_n\mid\pi\res E=\id\}\) for \(E\in [\omega]^\lomega\). By our prior calculations we get that \(\fix(\{k\})\leq\sym_{\bbP_n}(\dda^n_k)\in\sF_n\) and \(\sG_n=\sym_{\bbP_n}(\ddA_{n+1})\in\sF_n\), so the names \(\dda_k^n\) and \(\ddA_{n+1}\) are all in \(\HS_{\bbP_n}\). Finally, we set \(M_{n+1}\) to be the symmetric extension over \(M_n\) given by the symmetric system \(\sS_n=\tup{\bbP_n,\sG_n,\sF_n}\).

In this case \(M_1\) is Cohen's first model, defined in Example~\ref{eg:cohens-first-model}, and by the same argument the sets \(A_{n+1}\) are all infinite Dedekind-finite.

\begin{prop}\label{prop:monro}
For all \(n<\omega\), \(\sS_n\ast\dsS_{n+1}\) is upwards homogeneous. Therefore, \(M_{n+2}\) does not have any subsets of \(M_n\) that were not already in \(M_{n+1}\) and, by induction, \(\power^{n+1}(\Ord)^{M_{n+1}}=\power^{n+1}(\Ord)^{M_{n+2}}\). 
\end{prop}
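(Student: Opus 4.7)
The plan is to invoke the Main Theorem: showing $\sS_n \ast \dsS_{n+1}$ is upwards homogeneous yields $\power(M_n)^{M_{n+1}} = \power(M_n)^{M_{n+2}}$, from which the stated $\power^{n+1}(\Ord)$-equality follows by induction on $n$. Under the inductive hypothesis that $\power^n(\Ord)^{M_n} = \power^n(\Ord)^{M_{n+1}}$, every element of $\power^{n+1}(\Ord)^{M_{n+1}}$ or $\power^{n+1}(\Ord)^{M_{n+2}}$ is a subset of $\power^n(\Ord)^{M_n}$, hence of $M_n$, so the Main Theorem forces the two collections to coincide.

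To verify upwards homogeneity, I would fix $\bar\Gamma = \tup{\Gamma_0, \dot\Gamma_1} \in \sF$ and choose $E_0 \in [\omega]^\lomega$ with $\fix(E_0) \subseteq \Gamma_0$. Since $\dsF_{n+1}$ is generated by subgroups $\fix(\dot E)^\bullet$ for $\sS_n$-names $\dot E$ of finite subsets of $\omega$, and finite subsets of $\omega$ already lie in $V$, a harmless initial strengthening reduces me to the case $\dot\Gamma_1 \supseteq \fix(\check{E_1})^\bullet$ for an explicit $E_1 \in [\omega]^\lomega$. My candidate witnessing collection $D$ would consist of those $\tup{p^\circ, \ddq^\circ}$ for which $\ddq^\circ$ is a canonical name for a condition whose domain contains $E_1 \times \{\dda^n_j : j \in E_0\}$, with every value set (say) to $0$; this is plainly a dense subset of $\bbP$.

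Given $\tup{p^\circ, \ddq^\circ} \in D$ and $\tup{p, \ddq}, \tup{p, \ddq'} \leq \tup{p^\circ, \ddq^\circ}$, I would first strengthen $p$ to $p^\star$ deciding the explicit shapes $\ddq = \{((k_i, \dda^n_{j_i}), c_i) : i < N\}$ and $\ddq' = \{((k'_{i'}, \dda^n_{j'_{i'}}), c'_{i'}) : i' < N'\}$. I would then collect $F = \{k_i\}_{i<N}$, $F' = \{k'_{i'}\}_{i'<N'}$, $J = \{j_i\}_{i<N}$, $J' = \{j'_{i'}\}_{i'<N'}$, and write $D^\star$ for the first-coordinate projection of $\dom(p^\star)$. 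Next I would pick a finitary $\pi_0 \in \fix(E_0)$ sending $(D^\star \cup J) \setminus E_0$ to a set disjoint from $D^\star \cup J \cup J'$ — arranging $\pi_0 p^\star \comp p^\star$, since outside $E_0$ the first-coordinate supports become disjoint and inside $E_0$ the two functions agree — and a finitary $\pi_1 \in \fix(E_1)$ sending $F \setminus E_1$ to a set disjoint from $F'$. Setting $\bar\pi = \tup{\pi_0, \check{\pi_1}}$, below $p^{\star\star} = p^\star \cup \pi_0 p^\star$ the name $\bar\pi\ddq$ is forced to be $\{((\pi_1 k_i, \dda^n_{\pi_0 j_i}), c_i) : i < N\}$. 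A collision with an entry of $\ddq'$ requires $\pi_0 j_i = j'_{i'}$ and $\pi_1 k_i = k'_{i'}$; by the construction of $\pi_0, \pi_1$, this can happen only when $j_i \in E_0$ and $k_i \in E_1$, in which case the offending entry lies in $\dom \ddq^\circ$, where $\ddq$ and $\ddq'$ already agree. Thus $\bar\pi \ddq \comp \ddq'$, completing the verification.

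The main obstacle will be the bookkeeping across the two layers of the iteration: $\pi_0$ acts on the first-level $\omega$ — controlling which $\dda^n_j$ each entry of $\ddq$ refers to — while $\pi_1$ acts on the disjoint second-level $\omega$. The rectangle $E_1 \times \{\dda^n_j : j \in E_0\}$ is precisely the locus where neither permutation can separate $\ddq$ from $\ddq'$, which is exactly why $\ddq^\circ$ must freeze it in advance.
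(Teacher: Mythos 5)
Your verification of upwards homogeneity is essentially the same as the paper's: both freeze the rectangle $E_1\times E_0$ via the witness $\ddq^\circ$ and then use a pair of permutations, one on the level-$n$ index $\omega$ (acting on $p$ and on the $\dda^n_j$'s appearing in $\ddq$) and one on the level-$(n+1)$ index $\omega$ (acting on the first coordinate of $\ddq$), to make the unfrozen parts of $\ddq$ and $\ddq'$ disjoint. The paper does this in the cleaner language of canonical names $\ddq_f$ for finite partial functions $f\colon\omega\times\omega\to 2$, but the content of your $\pi_0$, $\pi_1$ is identical to its $\pi$, $\sigma$.

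One small slip worth fixing: your witness collection $D$, consisting of $\tup{p^\circ,\ddq^\circ}$ whose second coordinate covers $E_1\times\{\dda^n_j\mid j\in E_0\}$ \emph{with every value set to $0$}, is not actually dense (or even predense): if a condition already forces a value $1$ somewhere on that rectangle, it has no compatible member in $D$. Simply drop the value constraint, or, as the paper does, work locally: given a starting $\tup{p_0,\ddq_{f}}$, choose $f^\circ\comp f$ with $\dom(f^\circ)=E_1\times E_0$ and take $\ddq^\circ=\ddq_{f^\circ}$. Your induction sketch for the final $\power^{n+1}(\Ord)$-equality also quietly needs $\power^n(\Ord)^{M_{n+2}}=\power^n(\Ord)^{M_n}$, not just the $M_n$-vs-$M_{n+1}$ equality you cite as the hypothesis, but the paper itself waves this through, so that is not a complaint against you specifically.
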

\begin{proof}
Here we work in \(M_n\), so \(\sS_n\) is an element of our ground model \(M_n\) and the statement ``\(\sS_n\ast\dsS_{n+1}\) is upwards homogeneous'' makes sense. Note that conditions in \(\ddbbP_{n+1}\) have canonical \(\bbP_n\)-names: For \(f\) a finite partial function \(\omega\times\omega\to 2\) we define the \(\bbP_n\)-name \(\ddq_f\) for a condition in \(\ddbbP_{n+1}\) as
\begin{equation*}
\ddq_f=\{\langle \check{k},\dot{a}_m^n,\sidecheck{(f(k,m))}\rangle^\bullet\mid \langle k,m\rangle\in \dom(f)\}^\bullet.
\end{equation*}
That is, \(q_f(k,a^n_m)=f(k,m)\). Conversely, whenever \(p\forces_{\sS_n}\ddq\in\ddbbP_{n+1}\) we may extend \(p\) to \(p'\leq p\) and find some \(f\colon\omega\times \omega\to 2\) such that \(p'\forces_{\sS_n}\ddq=\ddq_f\). When the conditions of \(\ddbbP_{n+1}\) are cast into this canonical form, elements \(\pi\in\sG_n\) act on them as follows: Let \(\hat\pi\cdot f\) denote the function such that \((\hat\pi\cdot f)(k,\pi m)=f(k,m)\) for all \(k,m<\omega\). Then 
\begin{align*}
\pi\ddq_f &= \{ \langle \pi\check{k},\pi\dot{a}_m^n,\sidecheck{\pi(f(k,m))}\rangle^\bullet\mid \langle k,m\rangle\in \dom(f)\}^\bullet \\
& = \{ \langle \check{k},\dot{a}_{\pi m}^n,\sidecheck{(f(k,m))}\rangle^\bullet\mid \langle k,m\rangle\in \dom(f)\}^\bullet \\
& = \{ \langle \check{k},\dot{a}_{\pi m}^n,\sidecheck{(\hat\pi\cdot f(k,\pi m))}\rangle^\bullet\mid \langle k,m\rangle\in \dom(f)\}^\bullet \\
& = \{ \langle \check{k},\dot{a}_m^n,\sidecheck{(\hat\pi\cdot f(k,m))}\rangle^\bullet\mid \langle k,m\rangle\in \dom(\hat\pi\cdot f)\}^\bullet \\
& = \dot{q}_{\hat\pi\cdot f}
\end{align*}
We shall still denote by \(\pi f\) the action \(\pi f(\pi n,m)=f(n,m)\). In the following, we assume that all conditions in \(\ddbbP_{n+1}\) are in this canonical form.

Let \(\fix(E_0)\in\sF_n\) and \(\fix(E_1)\in\dsF_{n+1}\), where \(E_0,\,E_1\in[\omega]^\lomega\), and let \({\tup{p_0,\ddq_f}}\) be a condition in \({\bbP_n\ast\ddbbP_{n+1}}\). We wish to find a compatible condition \(\tup{p^\circ,\ddq^\circ}\) that witnesses upwards homogeneity for \(\fix(E_0)\ast\fix(E_1)\in\sF_n\ast\dsF_{n+1}\). We claim for any \(f^\circ\) such that \(\dom(f^\circ)=E_1\times E_0\) and \(f^\circ\comp f\), \(\tup{p_0,\ddq_{f^\circ}}\) is sufficient.\footnote{For example, \(f^\circ=f\res E_1\times E_0\cup\{\tup{a,0}\mid a\in (E_1\times E_0)\backslash\dom(f)\}\).} Set \(\ddq^\circ=\ddq_{f^\circ}\). Note that in this case we certainly have that \(\tup{p_0,\ddq_f}\comp\tup{p_0,\ddq^\circ}\).

Let \(\tup{p,\ddq_g},\tup{p,\ddq_h}\leq\tup{p_0,\ddq^\circ}\). We can find the required \(\pi\in\fix(E_0)\) and \(\dot\sigma\in\fix(E_1)\) as follows:

Firstly, since \(\dom(g)\) and \(\dom(h)\) are finite, there is \(\sigma\in\fix(E_1)\) such that \(\dom(\sigma g)\cap\dom(h)\subseteq E_1\times\omega\), akin to Cohen's first model in Example~\ref{eg:cohens-first-model}.

Similarly, given that \(\dom(p)\), \(\dom(\sigma g)\), and \(\dom(h)\) are all finite we can find \(\pi\in\fix(E_0)\) such that \(\dom(\pi p)\cap\dom(p)\subseteq E_0\times A_n\) and
\begin{equation*}
\dom(\hat\pi\cdot(\sigma g))\cap\dom(h)\subseteq E_1\times E_0.
\end{equation*}
Given that \(g,\,h\supseteq f^\circ\), and \(\dom(f^\circ)=E_1\times E_0\), we must have that \(\hat\pi\cdot(\sigma g)\comp h\), and hence \(\tup{\pi,\dot\sigma}\tup{p,\ddq_g}\comp\tup{p,\ddq_h}\) as required.
\end{proof}

\begin{rk}
In the proof of Proposition~\ref{prop:monro} we used the fact that there exist canonical \(\sS_0\)-names for conditions in \(\ddbbP_1\) on which the group \(\sG_0\) acts nicely. This is the case for all applications of upwards homogeneity that we will exhibit and, for the rest of the paper, we shall implicitly assume that the names for the conditions of \(\ddbbP_1\) are already cast into such canonical forms. Extending the first co-ordinate to achieve this will not affect whether \(\sS\) is upwards homogeneous.
\end{rk}

This case serves as an excellent intuitive framework for upwards homogeneity. If one only considers \(\fix(E_0)\leq\sym_{\sG_0}(\dot{X})\) and \({\fix(E_1)\leq\sym_{\dsG_1}([\dot{X}])}\), where \([\dot{X}]\) is an \(\sS_0\)-name for an \(\dsS_1\)-name \(\dot{X}\) of a set \(X\), then any \(\ddq\in\ddbbP_1\) that has information filled up on the domain \(E_0\times E_1\) determines the whole content of \(X\). Since \(E_0\) and \(E_1\) are both finite, we can always find some \(\ddq\) such that \(\dom(\ddq)\supset E_0\times E_1\), from which we can determine \(X\).

In this way, one may view upwards homogeneity as a sort of ``closure'' of \(\ddbbP_1\) with respect to both filters \(\sF_0\) and \(\dsF_1\). In many cases of symmetric extensions the filter is generated by \({\{\fix(E)\mid E\in\calI}\}\), where \(\calI\) is an ideal of subsets of a set related to the forcing. If \(\ddbbP_1\) is not closed enough compared to one of the ideals that are used to generate the filters then upwards homogeneity fails.

\begin{eg}\label{eg:counterexamples}
Let \(\bbP_0=\Add(\omega,\omega_1)\), \(\sG_0=S_{\omega_1}\) acting on \(\bbP_0\) via \(\pi p(\pi(\alpha),n)=p(\alpha,n)\), and \(\sF_0\) be the filter generated by \(\{\fix(E)\mid E\in [\omega_1]^{{<}\omega_1}\}\). Then \(\bbP_0\) adds a set \(A=\{a_\alpha\mid\alpha<\omega_1\}\) of \(\omega_1\)-many new reals. If \(E\subset\omega_1\) is countable in \(V\) then there is an enumeration \(\tup{a_\alpha\mid \alpha\in E}\) in \(M\), since it is fixed by \(\fix(E)\).

Then we let \(\ddbbP_1\) be the notion of forcing with conditions that are finite partial functions \(q\colon\omega\times A\to2\), ordered by reverse inclusion. Let \(\dsG_1=S_\omega\), acting on \(\ddbbP_1\) via \(\dot\sigma\ddq(\dot\sigma(\check{n}),\dda)=\ddq(\check{n},\dda)\), and \(\dsF_1\) be the filter generated by \(\{\fix(E)\mid E\in [\omega]^\lomega\}\). Then \(\ddbbP_1\) adds a set \(\{b_n\mid n<\omega\}\) where each \(b_n\) is a subset of \(A\). Let \(E\subset \omega_1\) be countable, then for each \(n<\omega\), \(\ddbbP_1\) adds \(\{\alpha\in E\mid a_\alpha\in b_n\}\) new to \(M\), since for each \(r\colon E\to 2\) in \(M\), \(\{ q\in\ddbbP_1\mid (\exists\alpha)q(a_\alpha,n)\neq r(n)\}\) is dense.
\end{eg}

In this example \(\ddbbP_1\), consisting of finite conditions, is not closed enough compared to the \(\sigma\)-closedness of the filter \(\sF_0\).

The reader may also verify that if in Monro's iteration \(\sS_0\ast\dsS_1\) we change the filter of either the first or the second iteration to the improper filter (the filter that contains the trivial subgroup) then we have added new subsets of \(\omega\) in the second iteration.

\subsection{Forcing over symmetric extensions}\label{sec:Examples;subsec:second-iterand-full}

Here we consider what happens to Definition~\ref{defn:upwards-homogeneous} if the second iterand is equivalent to a forcing notion. In this case, one may take \(\dsG_1=\{\id\}\) and upwards homogeneity can be simplified: For all \(\Gamma\in\sF_0\) there is a predense collection of \(\tup{p^\circ,\ddq^\circ}\in\bbP\) such that, for all \(\tup{p,\ddq},\,\tup{p,\ddq'}\leq\tup{p^\circ,\ddq^\circ}\), there is \(\pi\in\Gamma\) such that \(\tup{\pi p,\pi \ddq}\comp\tup{p,\ddq'}\). Notationally, we shall identify a (name for a) notion of forcing \(\ddbbP_1\) with the symmetric system \(\tup{\ddbbP_1,\{\id\},\{\{\id\}\}}^\bullet\).

An example of this scenario is \cite{KaragilaSchlicht2019}. Recall Cohen's first model defined in Example~\ref{eg:cohens-first-model} and let \(A=\{ a_n\mid n<\omega\}\) denote the Dedekind-finite set of reals added by \(\sS_0\) in this extension.

\begin{prop}[{\cite[Theorem~5.1]{KaragilaSchlicht2019}}]\label{prop:second-iterand-eg}
Let \(\kappa\) be any cardinal and let \(\bbP_1\) be \(\Col(A,\kappa)\), the notion of forcing given by well-orderable partial functions \(q\colon A\to\kappa\) with \(\abs{q}<\abs{A}\), ordered by reverse inclusion. Then \(\sS_0\ast\Col(\ddA,\check{\kappa})^\bullet\) is upwards homogeneous. That is, \(\bbP_1\) does not add new sets of ordinals.
\end{prop}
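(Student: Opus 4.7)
My plan is to mimic the swap-away argument from Proposition~\ref{prop:monro}, much simplified because $\dsG_1 = \{\id\}$ so that only the first-iterand symmetries $\Gamma_0 \in \sF_0$ play a role. Since $A$ is Dedekind-finite in $M_1$, $\ddbbP_1 = \Col(\ddA, \check{\kappa})$ is forced to consist of \emph{finite} partial functions $A \to \kappa$. For each finite partial function $f \colon \omega \to \kappa$ I would fix the canonical name
\begin{equation*}
\ddq_f = \{\tup{\dda_n, \sidecheck{f(n)}}^\bullet \mid n \in \dom(f)\}^\bullet,
\end{equation*}
realising to the condition $a_n \mapsto f(n)$; every element of $\ddbbP_1$ has a dense set of extensions of this form, and $\pi \ddq_f = \ddq_{\hat\pi \cdot f}$, where $(\hat\pi \cdot f)(\pi n) = f(n)$.

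Since $\dsF_1 = \{\{\id\}\}$, every $\bar\Gamma \in \sF$ contains some $\fix(E_0) \ast \{\id\}$ with $E_0 \in [\omega]^\lomega$, so it suffices to verify the homogeneity condition for these generators. Given any $\tup{p_0, \ddq_{f_0}} \in \bbP$, I would take $\tup{p^\circ, \ddq^\circ} = \tup{p_0, \ddq_{f^\circ}}$, where $f^\circ \supseteq f_0$ is any finite extension with $\dom(f^\circ) \supseteq E_0$. For any $\tup{p, \ddq_g}, \tup{p, \ddq_h} \leq \tup{p^\circ, \ddq^\circ}$ we then have $g, h \supseteq f^\circ$, so $g$ and $h$ agree on $E_0$.

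The crux is to choose a single finitary $\pi \in \fix(E_0)$, with support avoiding $E_0$, which sends
\begin{equation*}
\bigl(\pi_1(\dom(p)) \cup \dom(g)\bigr) \setminus E_0 \text{ into } \omega \setminus \bigl(\pi_1(\dom(p)) \cup \dom(h) \cup E_0\bigr),
\end{equation*}
where $\pi_1$ denotes projection onto the first coordinate; since each of these sets is finite, such $\pi$ exists. A short case analysis then gives $\pi p \comp p$ (conflicts in $\dom(\pi p) \cap \dom(p)$ can only occur at first coordinate $k \in E_0$, where $\pi$ acts trivially) and $\hat\pi \cdot g \comp h$ (points of $\dom(\hat\pi \cdot g) \cap \dom(h)$ either lie in $E_0$, where $g$ and $h$ agree, or outside $E_0$, which is precluded by the choice of $\pi$). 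Hence $\tup{\pi, \id}\tup{p, \ddq_g} \comp \tup{p, \ddq_h}$, as required.

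The main obstacle is purely combinatorial: ensuring a single permutation disentangles both coordinates simultaneously while preserving the common stem on $E_0$. No genuinely new idea beyond Proposition~\ref{prop:monro} is needed, since the trivial $\dsG_1$ removes the need to pick a second-iterand permutation, leaving only the standard argument that $\sG_0 = S_\lomega$ admits enough finitary permutations fixing $E_0$ to shuffle disjoint finite pieces of data.
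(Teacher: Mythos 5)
Your proof is correct and follows essentially the same route as the paper's: extend the second coordinate so its domain covers $E_0$, pass to canonical names with decided finite domains, and then choose a finitary permutation in $\fix(E_0)$ that shuffles the remaining finite supports so that $\pi p \comp p$ and the two $\ddbbP_1$-conditions become compatible outside $E_0$ (while agreeing on $E_0$ via the common stem $f^\circ$). The paper phrases the permutation choice as $\dom(\pi p')\cap\dom(p')\subseteq E$ and $\pi`` F\cap F'=E$, whereas you spell out the disjointness in one shot, but the underlying combinatorics is identical.
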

\begin{proof}
Note that the conditions in \(\bbP_1\) are finite. For \(\tup{p_0,\ddq_0}\in\ddbbP_1\) and \(E\in[\omega]^\lomega\), we find \(\tup{p^\circ,\ddq^\circ}\leq\tup{p_0,\ddq_0}\) so that \(p^\circ\forces_{\sS_0}\dom(\ddq^\circ)\supseteq\{\dda_n\mid n\in E\}^\bullet\). If \(\tup{p,\ddq}\) and \(\tup{p,\ddq'}\) extend \(\tup{p^\circ,\ddq^\circ}\), let \(p'\leq p\) so that \(p'\forces\dom(\ddq)=\{\dda_n\mid n\in F\}^\bullet\land\dom(\ddq')=\{\dda_n\mid n\in F'\}^\bullet\) for some \(F, F' \in [\omega]^\lomega\). Then we find \(\pi\in\fix(E)\) such that \({\dom(\pi p')\cap\dom(p')\subseteq E}\) and \(\pi`` F \cap F' = E\) and thus \( p'\forces_{\sS_0}\dom(\pi\ddq)\cap\dom(\ddq')=\{\dda_n\mid n\in E\}^\bullet\). It follows that \(\pi\tup{p,\ddq}\comp\tup{p,\ddq'}\). Therefore \(\tup{p^\circ, \ddq^\circ}\) witnesses upwards homogeneity as required.
\end{proof}

\begin{rk}
Note that if instead the first iterand is a full forcing extension then \(\bbP_0\ast\dsS_1\) is upwards homogeneous if and only if \(\1_{\bbP_0}\) forces that \(\dsS_1\) is trivial. In the case that the ground model satisfies \(\AC\) this is an immediate consequence of Balcar and Vop\v{e}nka's theorem.
\end{rk}

\subsection{Morris's iteration}\label{sec:Examples;subsec:morris-model}

A remarkable construction that violates choice is showing that countable unions of countable sets need not be countable. For example, in \cite{feferman_independence_1963} Feferman and L\'{e}vy construct a model of \(\ZF\) in which \(2^\omega\) is a countable union of countable sets (and consequently \(\omega_1\) is singular). In the same vein, as part of their PhD thesis \cite{Morris1970}, Morris constructed a model of \(\ZF\) in which, for all ordinals \(\alpha\), there is a set \(A_\alpha\) that is a countable union of countable sets admitting a surjection \(\power(A_\alpha)\to\aleph_\alpha\). The construction is recast into a modern form in \cite{Karagila2020} and corrected in \cite{Karagila2021}, the method of which we follow here.

Fix an infinite cardinal \(\kappa\) and define \(\bbP_0=\Add(\kappa,\omega\times\omega\times\kappa)\). Let
\begin{align*}
\ddx_{n,m,\alpha}&=\{\tup{p,\check{\beta}}\mid p(n,m,\alpha,\beta)=1\},\\
\dda_{n,m}&=\{\ddx_{n,m,\alpha}\mid\alpha<\kappa\}^\bullet,\text{ and}\\
\ddA_n&=\{\dda_{n,m}\mid m<\omega\}^\bullet.
\end{align*}
Set \(\sG_0=(\{\id\}\wr S_\omega)\wr S_\kappa\leq S_{\omega\times\omega\times \kappa}\), so elements are of the form
\begin{equation*}
\pi(n,m,\alpha)=\tup{n,\pi^\ast_n(m),\pi_{n,m}(\alpha)}
\end{equation*}
as in Definition~\ref{defn:wreath-product}, with action given by \({\pi p(\pi(n,m,\alpha),\beta)=p(n,m,\alpha,\beta)}\). Hence \(\pi\ddx_{n,m,\alpha}=\ddx_{\pi(n,m,\alpha)}\), \(\pi\dda_{n,m}=\dda_{n,\pi^\ast_n(m)}\), and \(\pi\ddA_n=\ddA_n\).

For \(n<\omega\) and \(E\in[\kappa]^{{<}\kappa}\), let \(\fix(n,E)=\{\pi\in\sG_0\mid\pi\res n\times\omega\times E=\id\}\), and set \(\sF_0\) to be the filter of subgroups generated by these groups.

Let \(M\) be a symmetric extension by the system \(\sS_0=\tup{\bbP_0,\sG_0,\sF_0}\). Working in \(M\), we define \(\bbP_1\): conditions in \(\bbP_1\) are finite sequences \(t=\tup{t_i\mid i\in E}\), where \(E\in[\kappa\times\omega]^\lomega\) and, for some \(n<\omega\), \(t_i\in \prod_{k<n}A_k\) for all \(i\in E\). We define \(\supp(t)=\dom(t)=E\), the \emph{support} of this condition. We say that \(t\leq s\) if:
\begin{enumerate}
\item \(\supp(t)\supseteq\supp(s)\);
\item for all \(i\in\supp(s)\), \(s_i\subseteq t_i\); and
\item for all \(i\neq j\in\supp(s)\), if \(t_i(k)=t_j(k)\) then \(k\in\vert s_i \vert (= \vert s_j \vert )\).
\end{enumerate}
That is, if \(t\) extends the sequences \(s_i\) in any way then these extensions must be pairwise distinct. We define the group \(\sG_1\) as \(\{\id\}\wr S_\lomega\leq S_{\kappa\times\omega}\) (where \(S_\lomega\) is the set of finite-support elements of \(S_\omega\)), acting on \(\bbP_1\) via \(\pi\tup{t_i\mid i\in E}=\tup{t_{\pi i}\mid i\in E}\). Finally, we let \(\sF_1\) be the filter of subgroups of \(\sG_1\) generated by groups of the form \(\fix(E)=\{\pi\in\sG_1\mid\pi\res E=\id\}\) for \(E\in[\kappa\times\omega]^\lomega\). Let \(\sS_1=\tup{\bbP_1,\sG_1,\sF_1}\), and let \(\dsS_1=\tup{\ddbbP_1,\dsG_1,\dsF_1}^\bullet\) be a canonical \(\sS_0\)-name for \(\sS_1\).

Let \(V \subseteq M \subseteq N\) be an iterated symmetric extension according to \(\sS_0\ast\dsS_1\). In \(M\), the sets \(A_n\) are all countable, as is the set \(\{A_n \mid n < \omega\}\), so \(A = \{ a_{n,m} \mid n,m<\omega\}\) is a countable union of countable sets. Furthermore, a standard argument shows that \(A\) is not countable. Also in \(M\), let \(\ddb_{\alpha,n}=\{\tup{t,\check{a}}\mid(\exists i)t_{\alpha,n}(i)=a\}\), \(\ddB_\alpha=\{\ddb_{\alpha,n}\mid n < \omega\}^\bullet\), and \(\ddB = \{\ddb_{\alpha,n} \mid\alpha<\kappa, n<\omega\}^\bullet\), noting that these are all hereditarily symmetric. In \(N\), \(B \subseteq A\) and the canonical enumeration \(\tup{B_\alpha \mid \alpha < \kappa}\) induces a surjection \(\power(A) \to B \to \kappa\). Crucially, Proposition~\ref{prop:morris-upwards-homogeneous} demonstrates that \(N\) contains no sets of ordinals not already found in \(M\) which, in particular, implies that \(\kappa\) has not been collapsed. By iterating such constructions and using upwards homogeneity, one can continue to guarantee that no cardinals are collapsed while adding the desired sets \(A_\alpha\) for all \(\alpha\).

\begin{prop}\label{prop:morris-upwards-homogeneous}
\(\sS_0\ast\dsS_1\) is upwards homogeneous.
\end{prop}

\begin{proof}
Let \(\tup{\Gamma_0,\dot\Gamma_1}\in\sF\) and \(\tup{p_0,\ddt_0}\in\bbP\). Then we find \(\tup{p^\circ,\ddt^\circ} \leq \tup{p_0,\ddt_0}\), \(n<\omega\), \(E_0 \in [\kappa]^{{<}\kappa}\) and \(E_1 \in [\kappa\times\omega]^\lomega\) such that \(\fix(n, E_0) \leq \Gamma_0\) and
\begin{equation*}
p^\circ \forces_{\sS_0} \fix(\check{E}_1) \leq \dot \Gamma_1 \land \supp(\dot t^\circ) = \check E_1 \land (\forall i \in \check{E}_1) (\lvert \ddt^\circ_i \rvert = \check{n}). 
\end{equation*}
Suppose that \(\tup{p,\ddt},\,\tup{p,\ddt'}\leq\tup{p^\circ,\ddt^\circ}\). Then we may find \(p' \leq p\), \(F, F' \in [\kappa\times\omega]^\lomega\) extending \(E_1\), and \(f \colon F \to \omega^m\), \(f' \colon F' \to \omega^{m'}\), for some \(m\), \(m'\) such that
\begin{equation*}
p'\forces_{\sS_0} \ddt_i = \tup{\dda_{k, f_i(k)} \mid k < m}^\bullet \land \ddt'_j = \tup{\dda_{k, f'_j(k)} \mid k < m'}^\bullet,
\end{equation*}
for all \(i \in F\) and \(j \in F'\). We first may find \(\sigma\in\fix(E_1)\) such that \(\sigma`` F \cap F' = E_1\) and thus \(p' \forces \supp(\check\sigma \ddt) \cap \supp (\ddt') = \check{E}_1\). Hence the only reason that \(\check\sigma \ddt\) and \(\ddt'\) might be incompatible is because of their values on \(E_1\). Note that for all \(k \in [n, m)\) and \(i \neq j \in E_1\) we have \(f_i(k) \neq f_j(k)\), and a similar argument holds for \(f'\). Thus \(\{ \tup{f_i(k), f'_i(k)} \mid i \in E_1 \}\) is an injective function and can be extended to \(b_k \in S_\omega\).  Let \(\pi \in \fix(n, E_0)\) be such that for all \(k\in[n,m)\), \(\pi^*_k = b_k\), taking \(m\leq m'\) without loss of generality. By using the last factor of the wreath product which acts on \(\kappa\) we may also ensure that \(\pi p' \comp p'\). Thus, if \(r \leq \pi p', p'\),
\begin{align*}
r \forces (\pi\check \sigma \ddt)_i 
	&= \pi \tup{\dda_{k, f_i(k)} \mid k < m}^\bullet \\
	&= \tup{ \pi \dda_{k, f_i(k)} \mid k < m}^\bullet \\
	&= \tup{\dda_{k, f'_i(k)} \mid k < m}^\bullet \\
	&= \ddt'_i\res m,
\end{align*}
so \(r \forces_{\sS_0} \pi\check \sigma\ddt \comp \ddt'\). Hence also \(\tup{\pi,\check\sigma}\tup{p,\ddt}\comp\tup{p,\ddt'}\). Note that while it is not necessarily the case that \(\1_{\bbP_0} \forces_{\sS_0} \check{\sigma} \in \dot \Gamma_1\), we can easily find an \(\sS_0\)-name \(\dot \tau\) such that \(\1_{\bbP_0} \forces_{\sS_0} \dot \tau \in \dot \Gamma_1\) and \(p^\circ \forces_{\sS_0} \dot \tau = \check{\sigma}\). Then \(\tup{\pi, \dot \tau}\) is as required.
\end{proof}

\subsection{The Bristol model}\label{s:eg;ss:bristol}

In \cite{vopenka_semisets_1972} Vop\v{e}nka shows that for all set-generic extensions \(V\subseteq V[G]\) of models of \(\ZFC\) and intermediate models \(V\subseteq W\subseteq V[G]\), if \(W\vDash\ZFC\) then \(W\) is also a set-generic extension of \(V\).\footnote{More precisely they show that \(L[A]\) is a generic extension of \(\operatorname{HOD}\) whenever \(A\) is a set of ordinals (see \cite[Theorem~15.46]{Jech1973}), but the theorem generalises easily (see \cite[Lemma~15.43]{Jech1973}).} However, if we ask only that \(W\) is a model of \(\ZF\) then this principle no longer holds. The Bristol Model is a model \(M\vDash\ZF\) such that \(L\subseteq M\subseteq L[\rho_0]\) but \(M\) fails to be a symmetric extension of \(L\), where \(\rho_0\) is Cohen\nobreakdash-generic over \(L\). This was first exposited in \cite{Karagila2018} and later expanded upon in \cite{Karagila2020ABM}. While we will not reproduce the entire construction of the Bristol model here, we will describe an upwards homogeneity argument made for limit iterands of the construction.

Originally, the precise question that the Bristol model sought to answer was ``if \(V \subseteq M \subseteq V[G]\), with \(V\) a model of \(\ZFC\) and \(M\) a model of \(\ZF\), must \(M\) be of the form \(V(x)\) for some \(x \in V[G]\)?''\footnote{Usuba \cite{usuba_choiceless_2021} later proved that this question is equivalent to asking if all intermediate models are symmetric extensions, per the prior paragraph.} This was answered in the affirmative for symmetric extensions by Grigorieff \cite{grigorieff_intermediate_1975}, but the question remained open in generality. The Bristol model answered this question negatively using upwards homogeneity. The model is a union of intermediate symmetric extensions \(V \subseteq M_0 \subseteq \cdots \subseteq M_\alpha \cdots \subseteq V[\rho_0]\) such that the \(\alpha\)\textup{th} stage violates the \(\alpha\)\textup{th} \emph{Kinna--Wagner principle}, which states ``for all \(X\) there is an ordinal \(\beta\) such that there is an injection \(X \to \power^\alpha(\beta)\).'' By using upwards homogeneity, the violations of these principles would be maintained and, in the final model, all Kinna--Wagner principles are violated. A simple argument shows that if \(V\) is a model of \(\ZFC\) and \(x\) has von Neumann rank \(\alpha\) then \(V(x)\) is a model of the \(\alpha\)\textup{th} Kinna--Wagner principle, so the Bristol model is indeed not a model of \(V(x)\) for any \(x \in V[\rho_0]\).

All the definitions here are made in \(L\). If \(\vec{a}\) is a sequence indexed by ordinals, \(a_\beta\) denotes its \(\beta\)\textup{th} component. The Bristol model is a class-length iteration of symmetric extensions beginning at \(L\). To define this sequence we require, for each \(\beta\in\Ord\), a name \(\dot{\rho}_\beta\). For successor \(\beta\), \(\dot{\rho}_\beta\) is a name for a sequence of length \(\omega_\beta\). We will not define \(\dot\rho_\beta\) in this case, but we will define \(\dot\rho_\beta\) for limit \(\beta\) when it is due. The \(\beta\)\textup{th} iterand is given by the symmetric system denoted \(\dot\sI_\beta=\tup{\ddbbQ_\beta,\dsG_\beta,\dsF_\beta}\), and the iterated symmetric system of the first \(\beta\) iterands is denoted \(\sS_\beta=\tup{\bbP_\beta,\frakG_\beta,\frakF_\beta}\).

Throughout this section we use the notation \(\mint_{\bar\pi}\) to mean the action of \(\bar\pi\) on elements of an iteration. In the case of a two-step iteration we define this by \(\mint_{\tup{\pi_0,\dot\pi_1}}\tup{p,\ddq}=\tup{\pi_0p,\pi_0(\dot\pi_1\ddq)}\) as usual, and in the case that we have a finite-support iteration we appeal to the fact that for all \(\bar\pi=\tup{\dot\pi_\alpha\mid\alpha<\gamma}\) there will be finitely many \(\alpha<\gamma\) such that \(\dot\pi_\alpha\) is non-trivial.

\begin{fact}[{Part of the induction hypothesis in \cite[\S4.2]{Karagila2018}}]
\label{fact:homogeneity-qgamma}
Suppose that we have constructed \(\dot\sI_\gamma=\tup{\ddbbQ_\gamma,\dsG_\gamma,\dsF_\gamma}^\bullet\) for all \(\gamma<\beta\). Then for all \(\gamma<\beta\) and \(\ddq,\ddq'\in\ddbbQ_\gamma\) there is some \(\dot\pi\in\dsG_\gamma\) such that \(\dot\pi \ddq\comp \ddq'\).
\end{fact}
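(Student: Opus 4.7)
The plan is to prove the Fact by induction on \(\gamma < \beta\), paralleling the recursive definition of the iterands \(\sI_\gamma = \tup{\bbQ_\gamma, \sG_\gamma, \sF_\gamma}\) in \cite{Karagila2018}. The statement asserts a \emph{weak homogeneity} of each iterand's forcing: any two conditions can be moved to compatibility by some symmetry. The inductive step splits into a successor case and a limit case, and the pattern follows the template already seen in Proposition~\ref{prop:monro} and Proposition~\ref{prop:second-iterand-eg}, namely that conditions have small supports while \(\sG_\gamma\) acts richly on a larger index set, so a suitable \(\pi\) can always be found that shifts the support of \(q\) off that of \(q'\) (or makes them agree on the overlap).

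For the successor stage \(\gamma = \delta+1\) (and the base case \(\gamma=0\)), the iterand \(\bbQ_\gamma\) in the Bristol construction is a Cohen-like forcing whose conditions are partial functions of size strictly less than some cardinal \(\kappa_\gamma\), and whose group \(\sG_\gamma\) contains permutations of an index set of size at least \(\kappa_\gamma\) (typically a wreath product in the spirit of Section~\ref{sec:Examples;subsec:morris-model}). Given \(q, q' \in \bbQ_\gamma\), the set \(\dom(q) \cup \dom(q')\) has size strictly less than \(\kappa_\gamma\), so we may choose \(\pi \in \sG_\gamma\) that fixes \(\dom(q) \cap \dom(q')\) pointwise and sends the rest of \(\dom(q)\) to coordinates disjoint from \(\dom(q')\); then \(\pi q\) and \(q'\) share no conflicting values, hence \(\pi q \comp q'\).

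For the limit case \(\gamma\) limit, the iterand \(\bbQ_\gamma\) is built using the name \(\dot\rho_\gamma\) defined at this stage, together with the already-constructed iterands at stages below \(\gamma\). A condition in \(\bbQ_\gamma\) has ``bounded'' data depending only on some \(\delta < \gamma\); one locates a common such \(\delta\) for both \(q\) and \(q'\), applies the inductive hypothesis for \(\bbQ_\delta\) to obtain a compatibility-inducing symmetry \(\pi_\delta\), and then canonically lifts \(\pi_\delta\) to an element \(\pi \in \sG_\gamma\) that is trivial on the remaining coordinates. The main obstacle I expect lies precisely here: verifying that this lift actually lies in \(\sG_\gamma\) (rather than merely in some larger permutation group of the index set) depends on the fine-grained structure of \(\dot\rho_\gamma\) at limit stages, and that is the technical heart of the Bristol construction whose details must be imported wholesale from \cite{Karagila2018}. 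This is also the reason that in the present paper the Fact is cited rather than reproved.
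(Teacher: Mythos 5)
The paper does not prove this Fact at all: it is cited as ``Part of the induction hypothesis in \cite[\S4.2]{Karagila2018}'', and the surrounding machinery needed to prove it (the successor-stage groups \(\sG_{\beta+1}\) and filters \(\sF_{\beta+1}\), the precise definition of the names \(\dot\rho_{\beta+1}\)) is deliberately left out of the present paper. So there is no in-paper argument to compare your proposal against; any genuine proof would have to be imported from \cite{Karagila2018}, which you correctly acknowledge at the end.

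That said, two aspects of your sketch are worth flagging. First, the description of the successor stage as ``a Cohen-like forcing whose conditions are partial functions of size less than some \(\kappa_\gamma\)'' does not match the definition the paper actually records: a condition in \(\dot{\bbQ}_{\beta+1}\) is a name of the form \(\mint_{\vec\pi}\dot{\rho}_{\beta+1}\res A\), a permuted restriction of a fixed name for a sequence of length \(\omega_{\beta+1}\) to a bounded set \(A\). This is structurally closer to a collapse- or tree-type forcing than to \(\Add\); the intuition ``bounded support vs.\ rich symmetry group'' is still the right one, but the mechanism is matching two permuted restrictions of \(\dot\rho_{\beta+1}\) on their overlap rather than shuffling the domain of a partial function. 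Second, since \(\sG_\gamma\) is simply not defined here for successor \(\gamma\), the successor step of your induction cannot even be set up from the material in this paper. Your proposal is honest about these limitations and gives the right high-level shape, but as it stands it cannot be completed without the details from \cite{Karagila2018} that the paper intentionally suppresses, which is exactly why the authors cite rather than reprove the Fact.
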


\subsubsection{Iterating the systems \(\dot\sI_\gamma\)} Note that if we have constructed \(\dot\sI_\gamma\) for all \(\gamma<\beta\) then we may combine these symmetric systems into a single symmetric system \(\sS_\beta=\tup{\bbP_\beta,\frakG_\beta,\frakF_\beta}\) as follows.

\textit{Part}\enspace{}\textup{1:} \(\bbP_\beta\).\quad{}The forcing \(\bbP_\beta\) is the finite-support iteration of \(\tup{\ddbbQ_\gamma\mid\gamma<\beta}\).

\textit{Part}\enspace{}\textup{2:} \(\frakG_\beta\).\quad{}The automorphism group \(\frakG_\beta\) consists of elements of the form \(\mint_{\vec\pi}\) such that:
\begin{enumerate}
\item \(\vec\pi=\tup{\dot\pi_\gamma\mid\gamma<\beta}\) with each \(\dot\pi_\gamma\) appearing in \(\dsG_\gamma\); and
\item for all but finitely many \(\gamma<\beta\), \(\dot\pi_\gamma\) is forced to be the identity function.
\end{enumerate}
For such a sequence \(\vec\pi\), set \(C(\vec\pi)=\{\gamma<\beta\mid\1_{\bbP_\gamma}\not\forces_{\sS_\gamma}\dot\pi_\gamma=\id^\bullet\}\). We define the action of \(\mint_{\vec\pi}\) on \(\vec{p}\in\bbP_\beta\) recursively by, if \(\alpha=\max C(\vec\pi)\), then \(\mint_{\vec\pi} \vec{p} = \mint_{\vec\pi\res\alpha} \mint_{\dot\pi_\alpha} \vec{p}\), where
\begin{equation*}
\mint_{\dot\pi_\alpha}\vec{p}=\vec{p}\res\alpha\concat\dot\pi_\alpha(p_\alpha)\concat\dot\pi_\alpha(\vec{p}\res(\alpha,\beta)).
\end{equation*}
In fact, this is not how \(\frakG_\beta\) is originally defined, but by \cite[Proposition~3.8]{Karagila2018} these are equivalent.
\begin{fact}
\label{fact:BM-int}
Let \(\dot\pi_\gamma\in \dsG_{\gamma}\) for each \(\gamma\), and let the length of \(\vec\pi\) be greater than \(\beta\). Then:
\begin{enumerate}[label=\textup{(\roman*)}]
\item\label{fact:BM-int;cond:i} \(\mint_{\pi_\beta}\mint_{\vec\pi\res\beta}=\mint_{\vec\pi\res\beta\concat\tup{\pi_\beta}}\), and
\item\label{fact:BM-int;cond:ii} \(\mint_{\vec\pi}\dot\rho_{\beta+1}=\mint_{\vec\pi\res(\beta+1)}\dot\rho_{\beta+1}\).
\end{enumerate}
\end{fact}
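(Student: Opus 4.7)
The plan is to derive both parts from the recursive coordinate-by-coordinate description of \(\mint_{\vec\pi}\), together with the fact that each \(\dot\pi_\gamma\) is an \(\sS_\gamma\)-name and is therefore undisturbed by actions at strictly later stages. Let \(\delta > \beta\) denote the length of \(\vec\pi\).

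For~(i), I would proceed by induction on \(\abs{C(\vec\pi\res\beta)}\). If \(\pi_\beta\) is (forced to be) the identity, the equality is immediate; so assume otherwise, whence \(\beta = \max C(\vec\pi\res\beta\concat\tup{\pi_\beta})\). By the recursive definition this gives
\[
\mint_{\vec\pi\res\beta\concat\tup{\pi_\beta}} = \mint_{\vec\pi\res\beta}\mint_{\pi_\beta},
\]
so (i) reduces to the commutativity \(\mint_{\pi_\beta}\mint_{\vec\pi\res\beta} = \mint_{\vec\pi\res\beta}\mint_{\pi_\beta}\). Unfolding the recursion on \(\mint_{\vec\pi\res\beta}\) via the inductive hypothesis further reduces this to showing that \(\mint_{\pi_\beta}\) commutes with each \(\mint_{\dot\pi_\alpha}\) for \(\alpha \in C(\vec\pi\res\beta)\); this is then verified by a direct coordinate-by-coordinate comparison, using that \(\dot\pi_\alpha\) is an \(\sS_\alpha\)-name and hence satisfies \(\pi_\beta\dot\pi_\alpha = \dot\pi_\alpha\) (since \(\pi_\beta\) only touches coordinates \(\geq \beta > \alpha\)).

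For~(ii), iterating~(i) across the finitely many non-trivial coordinates of \(\vec\pi\) lying above \(\beta\) yields
\[
\mint_{\vec\pi} = \mint_{\vec\pi\res(\beta+1)}\mint_{\vec\pi\res[\beta+1,\delta)},
\]
where the right-hand factor is well-defined by finite support. Since \(\dot\rho_{\beta+1}\) is an \(\sS_{\beta+1}\)-name and each \(\dot\pi_\gamma\) with \(\gamma \geq \beta+1\) fixes every such name, the tail factor acts trivially on \(\dot\rho_{\beta+1}\), giving the desired equality.

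The hard part will be the bookkeeping in~(i): verifying the single-coordinate commutation \(\mint_{\pi_\beta}\mint_{\dot\pi_\alpha} = \mint_{\dot\pi_\alpha}\mint_{\pi_\beta}\) for \(\alpha < \beta\). Because \(\mint_{\dot\pi_\alpha}\vec p\) applies \(\dot\pi_\alpha\) both to \(p_\alpha\) and to the \emph{entire} tail \(\vec p\res(\alpha,\delta)\)---including coordinate \(\beta\)---one must carefully track how the image of \(p_\beta\) under \(\dot\pi_\alpha\) is subsequently acted on by \(\pi_\beta\), and check that this agrees with applying \(\pi_\beta\) first. The key point making this work is that \(\dot\pi_\alpha\) is fixed by \(\pi_\beta\) on the nose, so the two actions are applied to each coordinate independently and commute.
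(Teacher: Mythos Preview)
Your overall strategy is sound and matches the paper's (very terse) treatment. For~(ii) the paper says exactly what you say: automorphisms at stages \(\alpha>\beta\) fix \(\dot\rho_{\beta+1}\), and combined with the factorisation of \(\mint_{\vec\pi}\) this gives the result. For~(i) the paper simply cites \cite[Proposition~4.5]{Karagila2018}, so your reduction to single-coordinate commutation is in effect reconstructing that referenced argument rather than diverging from the paper.

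There is, however, a misidentification of the ``key point'' in your commutativity verification for~(i). You write that the crucial fact is \(\pi_\beta\dot\pi_\alpha=\dot\pi_\alpha\), i.e.\ that \(\pi_\beta\) fixes the lower-stage name \(\dot\pi_\alpha\). That equality is true, but it is not what drives the commutation at coordinate \(\beta\). Unravelling the definitions, \(\mint_{\dot\pi_\alpha}\mint_{\pi_\beta}\vec p\) has \(\beta\)\textsuperscript{th} coordinate \(\dot\pi_\alpha(\pi_\beta\,p_\beta)=(\dot\pi_\alpha\pi_\beta)(\dot\pi_\alpha p_\beta)\), whereas \(\mint_{\pi_\beta}\mint_{\dot\pi_\alpha}\vec p\) has \(\beta\)\textsuperscript{th} coordinate \(\pi_\beta(\dot\pi_\alpha p_\beta)\). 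Equality therefore requires \(\dot\pi_\alpha\pi_\beta=\pi_\beta\): the \emph{lower} automorphism must fix the (name for the) \emph{higher} one, not the other way round. This holds precisely because of the hypothesis \(\pi_\beta\in\sG_\beta\) stated in the Fact: in the Bristol construction the groups \(\sG_\gamma\) live in \(L\), so \(\pi_\beta\) is a ground-model object and its canonical name is a check name, fixed by every automorphism. Your stated fact (\(\pi_\beta\) fixes \(\dot\pi_\alpha\)) is also used, but only for the tail coordinates \(>\beta\); at coordinate \(\beta\) itself it is the opposite direction that matters. Once you swap in the correct justification the proof goes through as you outline.
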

\begin{proof}
For \ref*{fact:BM-int;cond:i} see the proof of \cite[Proposition~4.5]{Karagila2018}. For \ref*{fact:BM-int;cond:ii} one needs to consider the definition of \(\dot{\rho}_{\beta+1}\) and the permutation groups \(\dsG_\alpha\): \(\dot{\rho}_{\beta+1}\) is a \(\dot{\bbQ}_{\beta+1}\)\nobreakdash-name and if \(\dot\pi_\alpha\in \dsG_\alpha\) for \(\alpha>\beta\) then \(\dot\pi_\alpha\) fixes \(\dot{\rho}_{\beta+1}\). 
\end{proof}

The following fact is due to the proofs of \cite[Proposition~4.5, Proposition~4.14]{Karagila2018}.

\begin{fact}
\label{fact:homogeneity-p-times-q}
Suppose that we have constructed \(\dot\sI_\gamma\) for all \(\gamma<\beta\). Then for all \(\gamma<\beta\), \(\vec{p}\in\bbP_\gamma\), and \(\ddq,\ddq'\in\ddbbQ_\gamma\), there is some \(\mint_{\vec\pi}\in\frakG_\gamma\) such that \(\mint_{\vec\pi}\vec{p}=\vec{p}\) and \(\vec{p}\forces \mint_{\vec\pi}\ddq\comp \ddq'\). Moreover, if \(\gamma=\gamma'+1\) is a successor, then the automorphism can be further specified: such \(\mint_{\vec\pi}\) can be taken such that \(\vec\pi\res \gamma'=\id\) and \(\dot\pi_{\gamma'} p_{\gamma'}=p_{\gamma'}\). 
\end{fact}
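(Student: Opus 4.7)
Proof plan. We proceed by induction on $\gamma<\beta$, proving the successor case (with the stronger moreover conclusion) and reducing the general case to it via the finite-support structure of $\bbP_\gamma$; throughout, Fact~\ref{fact:homogeneity-qgamma} provides the single-iterand homogeneity input.

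For the successor case $\gamma=\gamma'+1$: By Fact~\ref{fact:BM-int}\ref*{fact:BM-int;cond:i}, any $\mint_{\vec\pi}$ with $\vec\pi\res\gamma'=\id$ acts as $\mint_{\pi_{\gamma'}}$, applying $\pi_{\gamma'}$ only at the $\gamma'$-th coordinate, so $\mint_{\vec\pi}\vec{p}=\vec{p}$ is equivalent to $\pi_{\gamma'}p_{\gamma'}=p_{\gamma'}$. The task reduces to producing a $\bbP_{\gamma'}$-name $\pi_{\gamma'}$ for an element of $\sG_{\gamma'}$ witnessing $\pi_{\gamma'}p_{\gamma'}=p_{\gamma'}$ and $\vec{p}\forces\mint_{\pi_{\gamma'}}\ddq\comp\ddq'$. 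The plan is first to strengthen $\vec{p}$ (or argue densely below it) so that $\vec{p}\res\gamma'$ pins down canonical representations of $\ddq,\ddq'$ as $\bbP_{\gamma'}$-names exposing their dependence on $\bbQ_{\gamma'}$-generic data, in the spirit of the canonical forms used in Proposition~\ref{prop:monro}. In the extension by $\bbP_{\gamma'}$, Fact~\ref{fact:homogeneity-qgamma} applied to $\bbQ_{\gamma'}$ then supplies a $\pi_{\gamma'}$ realigning the $\bbQ_{\gamma'}$-indexed portion of $\ddq$ with that of $\ddq'$. The additional requirement $\pi_{\gamma'}p_{\gamma'}=p_{\gamma'}$ is secured by exploiting the residual freedom in Fact~\ref{fact:homogeneity-qgamma}: after further strengthening $p_{\gamma'}$ to isolate its support, one picks $\pi_{\gamma'}$ acting trivially on that support but non-trivially on fresh coordinates, in the spirit of the swapping argument in Example~\ref{eg:cohens-first-model}.

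For the general case, we use that $\vec{p}$ has finite support in $\bbP_\gamma$; by extending $\vec{p}$ we may assume that the relevant content of $\ddq,\ddq'$ depends only on coordinates below some $\delta+1\leq\gamma$. Applying the successor case at level $\delta+1$ yields $\vec\pi\in\frakG_{\delta+1}$ witnessing compatibility, and extending $\vec\pi$ by the identity on coordinates in $[\delta+1,\gamma)$ gives the required $\mint_{\vec\pi}\in\frakG_\gamma$.

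The hardest step is organizing the successor case so that $\pi_{\gamma'}$ simultaneously fixes $p_{\gamma'}$ and shifts the $\bbQ_{\gamma'}$-dependent part of $\ddq$ onto that of $\ddq'$. This tension is resolved by a two-stage density argument on $\bbP_{\gamma'}$: first strengthen below $\vec{p}\res\gamma'$ to decide the canonical data in $\ddq,\ddq'$, then strengthen $p_{\gamma'}$ further to reserve fresh coordinates outside its support, exploiting the specific permutation-group structure of $\sI_{\gamma'}$ which guarantees that such coordinates are always available.
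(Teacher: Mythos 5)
The paper does not actually prove this Fact: it is cited directly from the proofs of Propositions~4.5 and~4.14 in Karagila's Bristol model paper, so there is no in-paper proof to compare against. Evaluating your proposal on its own terms, the high-level induction (successor case with the ``moreover'' clause, then a reduction at limits) is a reasonable outline, but two of your central moves are problematic.

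First, you repeatedly plan to ``strengthen \(\vec{p}\) (or argue densely below it)'' so that \(\vec{p}\res\gamma'\) pins down canonical forms of \(\ddq\) and \(\ddq'\). This does not help here: the conclusion is a one-shot existential over \(\mint_{\vec\pi}\), and the \(\vec\pi\) you must produce is required to satisfy \(\mint_{\vec\pi}\vec{p}=\vec{p}\) for the \emph{original} condition \(\vec{p}\). Working below a strengthening \(\vec{p}'\leq\vec{p}\) produces a \(\vec\pi\) fixing \(\vec{p}'\), which does not imply \(\mint_{\vec\pi}\vec{p}=\vec{p}\), and \(\vec{p}'\forces\cdots\) is strictly weaker than \(\vec{p}\forces\cdots\). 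Nor can you patch together a different \(\vec\pi\) for each \(\vec{p}'\) in a dense set. Moreover, the step is not needed: in this Fact, \(\ddq,\ddq'\in\ddbbQ_\gamma\) means these names literally appear in the bullet-name \(\ddbbQ_\gamma\), so they are already of the canonical form \(\mint_{\vec\sigma}\dot\rho_{\gamma'+1}\res A\) for ground-model \(\vec\sigma\) and \(A\). You appear to be importing the ``cast into canonical form'' step from Proposition~\ref{prop:monro}, where the hypothesis is only \(p\forces_{\sS_0}\ddq\in\ddbbP_1\) and a density argument is both needed and harmless (because there the conclusion is stated densely). Here it is neither.

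Second, your reduction of the limit case to ``the successor case at level \(\delta+1\)'' conflates the forcings at different coordinates. When \(\gamma\) is a limit, \(\ddq,\ddq'\) are conditions in \(\ddbbQ_\gamma\), not in \(\ddbbQ_{\delta+1}\); the groups \(\sG_\gamma\) and \(\sG_{\delta+1}\) are likewise different, and the support \(A\) of a \(\ddbbQ_\gamma\)-condition is bounded but not in general finite, so ``finite support of \(\vec{p}\)'' does not carry the weight you give it. What the limit reduction actually requires (visible in the claim inside the proof of Proposition~\ref{prop:bristol-uphom}) is a coordinate-wise decomposition: for each \(\beta\in A\) the projection \(\tup{\mint_{\vec\pi}\dot\rho_{\beta+1}(g(\beta+1))\mid g\in E}^\bullet\) is a \(\ddbbQ_{\beta+1}\)-condition, and one must assemble automorphisms coordinate by coordinate, distinguishing the ``limit of limits'' and ``\(\theta+\omega\)'' cases. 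Your sketch skips this, which is where the substance of the argument lies.
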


\textit{Part}\enspace{}\textup{3:} \(\frakF_\beta\).\quad{}The filter \(\frakF_\beta\) consists of elements of the form \(\vec{H}=\tup{\ddH_\gamma\mid\gamma<\beta}\) such that:
\begin{enumerate}
\item \(\ddH_\gamma\) appears in \(\dsF_\gamma\) for all \(\gamma<\beta\); and
\item for all but finitely many \(\gamma<\beta\), \(\ddH_\gamma\) is forced to be \(\dsG_\gamma\).
\end{enumerate}
For \(\vec{H}\in\frakF_\beta\), let \(C(\vec{H})=\{\gamma<\beta\mid\1_{\bbP_\gamma}\not\forces_{\sS_\gamma}\ddH_\gamma=\dsG_\gamma\}\).

\subsubsection{Constructing \(\sI_\gamma\)}

We now proceed to define \(\dot\sI_\gamma=\tup{\ddbbQ_\gamma,\dsG_\gamma,\dsF_\gamma}\) for limit ordinals \(\gamma\). As they are less relevant to our purpose, we will not define \(\dsG_\gamma\) or \(\dsF_\gamma\) for successor \(\gamma\). However, we will still define \(\ddbbQ_\gamma\) in this case.

For all \(\beta\), \(\ddbbQ_{\beta+1}\) is as follows.
\begin{equation*}
\dot{\bbQ}_{\beta+1}=\left\{\mint_{\vec{\pi}}\dot{\rho}_{\beta+1}\res A\SetSymbol \mint_{\vec{\pi}}\in\mathfrak{G}_{\beta+1},\, A\subseteq\omega_{\beta+1}\text{ bounded, }A\in L\right\}.
\end{equation*}
Before we define \(\dot\sI_\gamma\) for limit \(\gamma\), we need to establish some notation. Firstly, given (partial) functions \(f,\,g\colon\gamma\to\Ord\), say that \(f\leq^\ast g\) if there is \(\alpha<\gamma\) such that for all \(\alpha<\beta<\gamma\), \(f(\beta)\leq g(\beta)\), and say that \(f=^\ast g\) if \(f\leq^\ast g\leq^\ast f\). For \(\beta\) a limit ordinal, let \(\prod\SC(\omega_\beta)\) be the product of successor cardinals below \(\omega_\beta\) indexed by successor ordinals. That is, an element of \(\prod\SC(\omega_\beta)\) is a function \(f\colon\{\alpha+1\mid\alpha<\beta\}\to\omega_\beta\) such that \(f(\gamma)<\omega_\gamma\) for all \(\gamma\). We denote by \(\overline{\prod\SC(\omega_\beta)}\) the \({=^\ast}\)-equivalence classes of \(\prod\SC(\omega_\beta)\). Fix a scale \(F=\{f_\eta\mid\eta<\omega_{\beta+1}\}\) in \(\overline{\prod\SC(\omega_\beta)}\), so \(F\) is a strictly increasing cofinal sequence in \(\overline{\prod\SC(\omega_\beta)}\) with respect to \(\leq^\ast\). Suppose that \(\vec\pi\) is a sequence \(\tup{\pi_\theta\mid \theta\in\SC(\omega_\beta)}\) such that \(\pi_\theta\) is an element of the symmetric group \(S_\theta\) for all successor cardinals \(\theta\) below \(\omega_\beta\). We say that \(\vec\pi\) \emph{implements} \(\pi\in S_{\omega_\beta}\) if for every large enough \(\theta\in\SC(\omega_\beta)\), \(f_{\pi(\eta)}(\theta)=\pi_\theta(f_\eta(\theta))\).

The permutation that \(\vec{\pi}\) implements is denoted \(\iota(\vec{\pi})\), if it exists. We say that \(F\) is a \emph{permutable scale} if every bounded permutation of \(\omega_{\beta+1}\) can be implemented by some \(\vec{\pi}\). That is, for all \(\eta<\omega_{\beta+1}\) every permutation of \(\eta\) can be implemented. By \cite[Theorem~3.27]{Karagila2018}, in \(L\) there is a permutable scale for every limit \(\beta\). We fix for each limit \(\beta\) a corresponding permutable scale.

Now assume that \(\alpha\) is a limit ordinal and, for each \(\beta<\alpha\), \(\dot\sI_\beta\) has been defined.
\begin{itemize}[itemsep=0.9ex]
\item For \(f\in \prod\SC(\omega_\alpha)\), let \(\dot\rho_{\alpha,f}\) be \(\tup{\dot\rho_{\beta+1}(f(\beta+1))\mid\beta<\alpha}^\bullet\).
\item Let \(\dot\rho_\alpha\) be \(\tup{\dot{\rho}_{\alpha,f}\mid f\in \prod\SC(\omega_\alpha)}^\bullet\). 
\item For \(E\subset \prod\SC(\omega_\alpha)\) and \(A\subset \alpha\), let \(\dot\rho_\alpha\res(E,A)\) be \(\tup{\dot{\rho}_{\alpha,f}\res A\mid f\in E}^\bullet\).
\item For \(f\in\prod\SC(\omega_\alpha)\), we define 
\begin{equation*}
\ddbbQ_{\alpha,f}=\left\{\mint_{\vec\pi}\dot\rho_{\alpha,f}\res A\SetSymbol A\subseteq\omega_\alpha\text{ bounded, }A\in L\right\}^\bullet
\end{equation*}
ordered by reverse inclusion.
\end{itemize}

We are now ready to define \(\dsI_\alpha=\tup{\ddbbQ_\alpha,\dsG_\alpha,\dsF_\alpha}\).

\textit{Part}\enspace{}\textup{1:} \(\ddbbQ_\alpha\).\quad{}We define \(\ddbbQ_\alpha\) to be the forcing
\begin{equation*}
\left\{\mint_{\vec\pi}\dot\rho_\alpha\res(E,A)\SetSymbol\mint_{\vec\pi}\in\frakG_\alpha,\,E\subseteq\prod\SC(\omega_\alpha)\text{ bounded, }A\subseteq\alpha\text{ bounded}\right\}^\bullet.
\end{equation*}
For \(\ddq=\tup{\mint_{\vec\pi}\dot\rho_{\beta+1}(g(\beta+1))\mid g\in E,\,\beta\in A}\in\ddbbQ_\alpha\) we denote by \(\ddq(f)\) the sequence \(\tup{\mint_{\vec\pi}\dot\rho_{\beta+1}(f(\beta+1))\mid\beta\in A}\). Say that \(\ddq\leq_{\ddbbQ_\alpha}\ddq'\) if and only if for all \(f\in\prod\SC(\omega_\alpha)\), \(\ddq(f)\leq_{\ddbbQ_{\alpha,f}}\ddq'(f)\).

Note that if \(\ddq=\tup{\mint_{\vec\pi}\dot\rho_{\beta+1}(g(\beta+1))\mid\beta\in A,\,g\in E}^\bullet\in\ddbbQ_\alpha\) then for each \(\beta\in A\) we have \(\tup{\mint_{\vec\pi}\dot\rho_{\beta+1}(g(\beta+1))\mid g\in E}^\bullet\in\ddbbQ_{\beta+1}\), since \(\mint_{\vec\pi}\dot\rho_{\beta+1}=\mint_{\vec\pi\res(\beta+1)}\dot\rho_{\beta+1}\) and \(E\) is bounded.

\textit{Part}\enspace{}\textup{2:} \(\dsG_\alpha\).\quad{}The group \(\dsG_\alpha\) is defined as the full-support product \(\prod_{\theta\in \SC(\omega_\alpha)}\sG_\theta\).

\textit{Part}\enspace{}\textup{3:} \(\dsF_\alpha\).\quad{}For \(\eta<\omega_{\alpha+1}\) and \(f\in \prod\SC(\omega_\alpha)\), let
\begin{equation*}
K_{\eta,f}\defeq\left\{\vec{\pi}\in\sG_\alpha\SetSymbol \iota(\vec\pi)\res\eta=\id\text{ and, for all }\mu\in \SC(\omega_\alpha),\,\pi_\mu\res f(\mu)=\id\right\}.
\end{equation*}
Then define \(\dsF_\alpha\) to be the filter generated by \(\{K_{\eta,f}\mid\eta<\omega_{\alpha+1},\,f\in\prod\SC(\omega_\alpha)\}\). This is indeed normal by \cite[Proposition~3.28]{Karagila2018}.

\subsubsection{Upwards homogeneity of \(\sS_\alpha\ast\dsI_\alpha\)}

Having constructed this limit stage, we may now show that it does indeed form an upwards homogeneous iteration of symmetric extensions. The structure of the proof of Proposition~\ref{prop:bristol-uphom} is from \cite[Lemma~4.10]{Karagila2018} but is adapted to suit our setting.

\begin{prop}\label{prop:bristol-uphom}
\(\tup{\bbP_\alpha,\frakG_\alpha,\frakF_\alpha}\ast\tup{\ddbbQ_\alpha,\dsG_\alpha,\dsF_\alpha}^\bullet\) is upwards homogeneous.
\end{prop}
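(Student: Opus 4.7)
The plan is to reduce to a single generator of $\frakF_\alpha\ast\dsF_\alpha$ of the form $\bar\Gamma=\tup{\vec H,\dot K}$, where $\dot K$ is forced to equal $K_{\check\eta,\check f}$ for some fixed $\eta<\omega_{\alpha+1}$ and $f\in\prod\SC(\omega_\alpha)$, and then to build the dense set witnessing upwards homogeneity by strengthening an arbitrary $\tup{\vec p_0,\ddq_0}\in\bbP_\alpha\ast\ddbbQ_\alpha$ to a condition $\tup{\vec p^\circ,\ddq^\circ}$ which ``fills in the core'' that $\bar\Gamma$ is forced to fix. Concretely, I would write $\ddq^\circ=\mint_{\vec\pi^\circ}\dot\rho_\alpha\res(E^\circ,A^\circ)$ where $E^\circ$ contains $\{f_\zeta\mid\zeta<\eta\}$ (the initial segment of the permutable scale below $\eta$), $A^\circ$ contains $C(\vec H)$ together with the support of $\vec p_0$, and $\vec p^\circ$ is chosen so that it decides enough information on $A^\circ$. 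This pattern mirrors the ``closure'' heuristic used in Monro's iteration and in the full-forcing case: the pair $(\eta,f)$ is precisely the information that must be explicitly present in $\ddq^\circ$ to rule out any forced disagreement after permutation.

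Given $\tup{\vec p,\ddq},\tup{\vec p,\ddq'}\leq\tup{\vec p^\circ,\ddq^\circ}$, write $\ddq=\mint_{\vec\pi_1}\dot\rho_\alpha\res(E,A)$ and $\ddq'=\mint_{\vec\pi_2}\dot\rho_\alpha\res(E',A')$ with $E,E'\supseteq E^\circ$ bounded in $\prod\SC(\omega_\alpha)$. For the $\dsF_\alpha$-component, I would first construct a bounded permutation $\sigma\in S_{\omega_{\alpha+1}}$ that is the identity on $\eta$ and sends the indices coming from $E\setminus E^\circ$ off those of $E'\setminus E^\circ$; this is possible since both are bounded and the scale is cofinal. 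By the permutability of the fixed scale, $\sigma=\iota(\vec\sigma)$ for some $\vec\sigma=\tup{\sigma_\mu\mid\mu\in\SC(\omega_\alpha)}$, and the freedom in choosing each $\sigma_\mu$ above $f(\mu)$ lets us arrange $\vec\sigma\in K_{\eta,f}$. For the $\frakF_\alpha$-component, I would apply Fact~\ref{fact:homogeneity-p-times-q} iteratively at the finitely many coordinates in $C(\vec H)$ to produce $\mint_{\vec\tau}\in\vec H$ with $\mint_{\vec\tau}\vec p\comp\vec p$, exploiting that $\vec H$ restricts only finitely many coordinates and that each $\sI_\gamma$ already enjoys the homogeneity recorded in Fact~\ref{fact:homogeneity-qgamma}.

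Combining the two rearrangements, I would set $\bar\pi=\tup{\mint_{\vec\tau},\vec\sigma}\in\bar\Gamma$ and check that $\bar\pi\tup{\vec p,\ddq}\comp\tup{\vec p,\ddq'}$: on the core $E^\circ$ both conditions already extend $\ddq^\circ$, and outside the core their index sets have been made disjoint by $\vec\sigma$, so there is no room for them to disagree. The hard part will be verifying this combined compatibility rigorously, because $\mint_{\vec\tau}$ acts non-trivially on the $\dot\rho_{\beta+1}$ names appearing inside $\ddbbQ_\alpha$-conditions while $\vec\sigma$ acts on their indices; one must use Fact~\ref{fact:BM-int}, together with the fact that $A^\circ\supseteq C(\vec H)$, to see that the two actions commute well enough that neither destroys the separation produced by the other. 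Once that bookkeeping is established, the required compatibility follows from the disjointness of the indices and the fact that $\mint_{\vec\tau}\vec p$ is already compatible with $\vec p$.
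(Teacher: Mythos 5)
Your overall plan of ``filling in a core'' and then splitting the work between the two factors of the group is the right shape, but several of the concrete choices go wrong, and the division of labour between the two components is essentially inverted compared to what can actually be made to work.

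First, the core $E^\circ=\{f_\zeta\mid\zeta<\eta\}$ is the wrong set. An element $\vec\sigma\in K_{\eta,f}$ satisfies $\sigma_\mu\res f(\mu)=\id$ for every $\mu$, so $\vec\sigma$ fixes \emph{every} $g\in\prod\SC(\omega_\alpha)$ with $g\leq f$ pointwise, not merely the scale functions below $\eta$. Hence if $E\setminus E^\circ$ contains such a $g$, no $\vec\sigma\in K_{\eta,f}$ can move it off $E'\setminus E^\circ$, and your intended separation is impossible in general. This is precisely why the paper's core uses $f{\downarrow}=\{g\mid g(\mu)\leq f(\mu)\ \text{for all}\ \mu\}$ rather than an initial segment of the scale. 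Relatedly, you speak of $\sigma$ ``sending the indices coming from $E\setminus E^\circ$ off those of $E'\setminus E^\circ$'' as if $E,E'$ were subsets of $\omega_{\alpha+1}$ permuted by $\sigma$; but $E,E'$ are arbitrary bounded subsets of $\prod\SC(\omega_\alpha)$, whose members need not be scale functions at all, and $\vec\sigma$ acts coordinate-wise via the $\sigma_\mu$, not via $\iota(\vec\sigma)$. So the permutability of the scale does not give you the needed freedom to drive $E$ and $E'$ apart.

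Second, and more fundamentally, applying Fact~\ref{fact:homogeneity-p-times-q} ``iteratively at the finitely many coordinates in $C(\vec H)$'' cannot produce $\mint_{\vec\tau}\in\vec H$: the whole point of $C(\vec H)$ is that $\vec H$ constrains exactly those coordinates, whereas the automorphisms supplied by Fact~\ref{fact:homogeneity-p-times-q} are unrestricted. The paper does the opposite: it uses the (unconstrained) automorphisms from Facts~\ref{fact:homogeneity-qgamma} and \ref{fact:homogeneity-p-times-q} only \emph{above} $\delta=\max C(\vec H)$, where $\vec H$ imposes no restriction, and handles coordinates $\beta\leq\delta$ via the \emph{second} component $\vec\tau\in\dsG_\alpha$, which permutes within each fibre $\ddbbQ_{\beta+1}$ rather than across the index set. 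The reason $\vec\tau$ lands in $K_{\eta,f}$ is exactly that $\ddq$ and $\ddq'$ already agree on $f{\downarrow}\times(\delta+1)$ (because both extend $\ddq^\circ$) and $\vec\tau$ has finite support, forcing $\iota(\vec\tau)=\id$. Your proposal places almost all the work on the wrong component in each case, so the ``combined compatibility'' you flag as the hard part is not just a bookkeeping issue --- it is where the argument breaks. An inductive claim in the style of the paper (splitting into ``$\alpha$ limit of limits'' and ``$\alpha=\theta+\omega$'') is needed to handle the high coordinates, and no analogue of it appears in your sketch.
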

\begin{proof}
Let \(\vec{H}\in\frakF_\alpha\) and \(K_{\eta,f}\in \sF_\alpha\). Given \(\tup{p_0,\ddq_0}\in\bbP_\alpha\ast\ddbbQ_\alpha\), taking \(\ddq_0\) to be \(\mint_{\vec{\pi}_0}\dot{\rho}_\alpha\res (E_0,A_0)\), let  \(\ddq^\circ\) be an extension of both \(\ddq_0\) and \(\mint_{\vec{\pi}_0}\dot{\rho}_\alpha\res (f{\downarrow},\delta+1)\), where \(\delta=\max C(\vec{H})\) and
\begin{equation*}
f{\downarrow}\defeq\left\{g\in \prod\SC(\omega_\alpha)\SetSymbol g(\mu)\leq f(\mu)\text{ for all }\mu\right\}.
\end{equation*}
We will show that this is a condition that meets our requirement in Definition~\ref{defn:upwards-homogeneous}.

Let \(\tup{\vec{p},\ddq},\,\tup{\vec{p},\ddq'}\leq\tup{p_0,\ddq^\circ}\). Suppose \(\ddq=\tup{\mint_{\vec{\pi}}\dot{\rho}_{\beta+1}(g(\beta+1))\mid \beta\in A,\, g\in E }^\bullet\). By Fact~\ref{fact:homogeneity-qgamma}, for all \(\beta<\alpha\) and \(r,r'\in \bbQ_{\beta+1}\) there is some \(\tau_\beta\in \sG_{\beta+1}\) such that \(\tau_{\beta}r\comp r'\). Therefore, since \(\tup{ \mint_{\vec{\pi}}\dot{\rho}_{\beta+1}(g(\beta+1))\mid g\in E}^\bullet \in \ddbbQ_{\beta+1}\) for each \(\beta\in A\cup(\delta+1)\), there is a sequence \(\vec{\tau}\) such that \(\tau_\gamma=\id\) for all \(\gamma\notin A\cap (\delta+1)\) and \(\vec{\tau}\ddq\res(\delta+1) \comp \ddq'\res(\delta+1)\).

Since \(\vec{\tau}\) is almost all identity, we get \(\iota(\vec{\tau})=\id\). Moreover, for \(\beta\in A\cap (\delta+1)\), \(\tau_\beta\) only needs to move co-ordinates above \(f(\beta+1)\), since \(\ddq\) and \(\ddq'\) both extend \(\mint_{\vec{\pi}_0}\dot{\rho}_\alpha\res (f{\downarrow},\delta+1)\). Therefore \(\vec{\tau}\in K_{\eta,f}\).

\begin{claim} For all \(\vec{p}\in \bbP_\alpha\) and \(\ddq,\ddq'\in \ddbbQ_\alpha\), if \(\ddq\res \delta+1 \comp \ddq'\res \delta+1\) then there is \(\mint_{\vec{\pi}}\in \frakG_\alpha\) such that \(\vec{\pi}\res \delta+1=\id\), \(\mint_{\vec{\pi}}\vec{p} =\vec{p}\), and \(\mint_{\vec{\pi}}\ddq\comp \ddq'\).
\end{claim}
\begin{proof}[Proof of Claim]\renewcommand{\qedsymbol}{\ensuremath{\dashv}}
Assume that \(\ddq=\mint_{\vec{\sigma}}\tup{ \dot{\rho}_{\beta+1}(g(\beta+1))\mid \beta\in A,\,g\in E}^\bullet\) and that \({\ddq'=\mint_{\vec{\sigma}'}\tup{ \dot{\rho}_{\beta+1}(g(\beta+1))\mid \beta\in A',\,g\in E'}^\bullet}\), with \(\ddq,\,\ddq'\) compatible below \(\delta+1\). Then we prove the claim by induction on \(\alpha\). There are two cases: 
\begin{enumerate}[label=\textup{\arabic*.}]
\item \(\alpha\) is a limit of limit ordinals and there is a least limit ordinal \(\theta\) such that \(A,\,A'\subset \theta\); or
\item \(\alpha=\theta+\omega\) for some limit \(\theta\).
\end{enumerate}
In both cases, we may assume that \(\delta<\theta\).

\textit{Case}\enspace{}1.\quad{}By the induction assumption on \(\theta\), for all \(\vec{p}\res \theta\in \bbP_\theta\) and \({\ddq\res \theta,\ddq'\res \theta\in \ddbbQ_\theta}\) there is \(\mint_{\vec{\pi}\res \theta}\in \frakG_\theta\) such that \(\vec{\pi}\res\delta+1=\id\), \(\mint_{\vec{\pi}\res\theta}\vec{p}\res \theta=\vec{p}\res\theta\), and \(\mint_{\vec{\pi}\res\theta}\ddq\res\theta\comp \ddq'\res\theta\). Since \(A,\,A'\subseteq\theta\) we may take the rest of the sequence \(\vec{\pi}\res[\theta,\alpha)\) to be the identity.

\textit{Case}\enspace{}2.\quad{}By the induction assumption on \(\theta\), we get part of the sequence \(\vec{\pi}\res \theta\). Since \(A\) and \(A'\) are bounded in \(\alpha\) we have only finitely many components left to define. Using Fact~\ref{fact:homogeneity-p-times-q} we find \(\pi_\theta\) such that \(\pi_\theta p_\theta=p_\theta\) and \(\mint_{\pi_\theta}\mint_{\vec{\pi}\res \theta}\ddq\comp\ddq'\) on the \((\theta+1)\)\textup{st} co-ordinate. That is,
\begin{equation*}
\mint_{\pi_\theta}\mint_{\vec{\pi}\res\theta}\mint_{\vec{\sigma}}\tup{ \dot{\rho}_{\theta+1}(g(\theta+1))\mid g\in E}^\bullet\ \mathbin{\mbox{\LARGE\(\comp\)}}\ \mint_{\vec{\sigma}'}\tup{\dot{\rho}_{\theta+1}(g(\theta+1))\mid g\in E'}^\bullet
\end{equation*} 
as conditions in \(\ddbbQ_{\theta+1}\). If \(\pi_{\theta+k}\) has been defined, we similarly find \(\pi_{\theta+k+1}\in \sG_{\theta+k+1}\) such that \(\pi_{\theta+k+1}p_{\theta+k+1}=p_{\theta+k+1}\) and \(\mint_{\pi_{\theta+k+1}}\mint_{\vec{\pi}\res \theta+k+1}\ddq\comp \ddq'\) on the \((\theta+k+1)\)\textup{st} co-ordinate.
\end{proof}
By the claim, we can find some \(\mint_{\vec{\pi}'}\in \vec{H}\) such that \(\mint_{\vec{\pi}'}\vec{\tau}\ddq\comp \ddq'\), and \(\mint_{\vec{\pi}'}\) fixes \(\vec{p}\).
\end{proof}

\section{Open questions}\label{sec:Future}

\begin{qn}
Consider an upwards homogeneous iteration \(\sS_0\ast\dsS_1\) defined in a model of \(\ZFC\). Then in the symmetric extension by \(\sS_0\) we have that \(\1_{\sS_1}\) forces that no new sets of ordinals are added. We now understand how this works given that the intermediate model is a symmetric extension, but what does this ``look like'' in the intermediate model? More precisely, what conditions are required for a symmetric system to add no new sets of ordinals? Or, less generally, what conditions are required for a symmetric system to add no new sets of reals?
\end{qn}

\begin{qn}
While not adding any subsets of the ground model is a powerful result, one sometimes can be satisfied by only partially completing this goal. For example, what conditions guarantee that in an iteration \(\sS_0\ast\dsS_1\) no new subsets of the ground model of rank less than \(\alpha\) are added? What conditions guarantee that a single symmetric system \(\sS\) (in a model of \(\ZF\)) adds no subsets of rank less than \(\alpha\)?
\end{qn}

\begin{qn}
How does the theory of upwards homogeneity work under iteration? We saw in Section~\ref{sec:Examples;subsec:monros-iteration} an example of an \(\omega\)-length iteration of symmetric extensions in which each iterand is upwards homogeneous over its predecessor (and hence any factorisation of a finite partial iteration is upwards homogeneous), but a technique for moving beyond the \(\omega\)\textup{th} step was developed only recently in \cite{Shani2018}. What can we say about the limit stage model of a finite support iteration of upwards homogeneous symmetric extensions?
\end{qn}

\section{Acknowledgements}\label{sec:Acknowledgements}

The authors would like to thank Asaf Karagila and Andrew Brooke-Taylor for helpful discussions regarding the paper. The authors would also like to thank the anonymous referee for their help in enhancing the clarity of the paper.

\end{document}